\documentclass[english,a4paper,12pt,reqno]{article}
\usepackage[T1]{fontenc}
\usepackage[latin9]{inputenc}
\usepackage{babel}
\usepackage{latexsym, amsfonts, amsthm,amssymb,amscd,amsmath}
\usepackage{enumerate}
\usepackage{exscale}
\usepackage{color}
\usepackage{graphicx}
\usepackage{overpic}
\usepackage{bm}
\usepackage{bbm}
\usepackage{mathrsfs}
\usepackage{enumerate}
\usepackage{natbib}

\definecolor{DarkGreen}{rgb}{0.2,0.6,0.2}

\numberwithin{equation}{section}

\def\mb#1{\boldsymbol{\mathbf{#1}}}

\def\<{\langle}\def\>{\rangle}

\newtheorem{theorem}{Theorem}[section]
\newtheorem{proposition}[theorem]{Proposition}
\newtheorem{corollary}[theorem]{Corollary}
\newtheorem{lemma}[theorem]{Lemma}

\theoremstyle{definition}
\newtheorem{definition}[theorem]{Definition}
\newtheorem{example}[theorem]{Example}
\newtheorem{remark}[theorem]{Remark}

\topmargin=-1.5cm
\textheight=24cm
\oddsidemargin=-1truecm
\evensidemargin=-2truecm
\textwidth=17cm
\renewcommand{\baselinestretch}{1.1}\normalsize

\title{\bf Constructing functions with prescribed\\ pathwise quadratic variation}
\author{\normalsize Yuliya Mishura\\
\normalsize Department of Probability, Statistics\\
\normalsize  and Actuarial Mathematics\\ \normalsize Taras Shevchenko National University of Kyiv\\
\normalsize  01601 Kyiv, Ukraine\\
\normalsize    {\tt myus@univ.kiev.ua}\and  \normalsize Alexander Schied\thanks{
Support by Deutsche Forschungsgemeinschaft through the Research Training Group RTG 1953 is gratefully acknowledged.}\\  \normalsize Department of Mathematics\\
 \normalsize University of Mannheim\\
 \normalsize 68131 Mannheim, Germany\\
 \normalsize {\tt alex.schied@gmail.com} }

\date{\small First version: November 14, 2015\\
\small This version: April 12, 2016 }

\begin{document}
\setcitestyle{numbers,open={[},close={]}}

\maketitle

\begin{abstract}
We construct rich vector spaces of continuous functions with prescribed curved or linear pathwise quadratic variations. We also construct a class of functions whose quadratic variation may depend in a local and nonlinear way on the function value. These functions can then be used as integrators in F\"ollmer's pathwise It\=o calculus. Our construction of the latter class of functions relies on an extension of the Doss--Sussman method to a class of nonlinear It\=o differential equations for the F\"ollmer integral. As an application, we provide a deterministic variant of the support theorem for diffusions. We also establish that many of  the constructed functions are nowhere differentiable.\end{abstract}

\noindent{\it Keywords:} Pathwise quadratic variation, F\"ollmer integral, pathwise It\=o differential equation, Doss--Sussman method, support theorem, nowhere differentiability

\section{Introduction}

In the seminal paper~\citep{FoellmerIto}, H.~F\"ollmer provided a strictly pathwise approach to It\=o's formula. The formula is  \lq\lq pathwise\rq\rq\ in the sense that integrators are fixed, nonstochastic functions $x:[0,1]\to\mathbb{R}$ that do not need to arise as typical sample paths of a semimartingale. It thus became clear that It\=o's formula is essentially a second-order extension of the fundamental theorem of calculus for Stieltjes integrals. A systematic introduction to pathwise It\=o calculus, including an English translation of~\citep{FoellmerIto}, is provided in~\citep{Sondermann}.

In recent years, there has been an increased interest in pathwise It\=o calculus. On the one hand, this increase is due to a growing sensitivity to model risk in mathematical finance and economics and the ensuing aspiration to construct dynamic trading strategies without reliance on a probabilistic model. As a matter of fact, a number of recent case studies have shown that some nontrivial results of this type can be obtained by means of pathwise It\=o calculus; see, e.g.,~\citep{Benderetal1,BickWillinger,DavisRavalObloij,FoellmerECM,Lyons95,SchiedCPPI,SchiedStadje,Vovk12}. On the other hand, the recent functional extension of F\"ollmer's pathwise It\=o formula by  Dupire~\citep{Dupire} and Cont and Fourni\'e~\citep{ContFournieJFA, ContFournieAOP} facilitated new and exciting mathematical developments such as the theory of viscosity solution of  partial differential equations on   infinite-dimensional path space~\citep{EkrenKellerTouzi,EkrenTouziZhang}.

A function $x\in C[0,1]$ can be used as an integrator in F\"ollmer's pathwise It\=o calculus if it admits a continuous  pathwise quadratic variation $t\mapsto \<x\>_t$ along a given refining sequence  of partitions of $[0,1]$. It is, however, not entirely straightforward to construct  functions with a given, nontrivial quadratic variation. Of course, one can use the sample paths of a continuous semimartingale, but these will satisfy the requirement only in an almost sure sense, and it will not be possible to determine whether a particular sample path   will be as desired or belong to the nullset of trajectories for which the quadratic variation does not exist. Based on results by Gantert~\citep{GantertDiss,Gantert}, a set $\mathscr{X}\subset C[0,1]$ was constructed in~\citep{SchiedTakagi} for which each element $x\in\mathscr{X}$ has the linear quadratic variation $\<x\>_t=t$. This set, however, has the disadvantage that the quadratic variation of the sum of $x,y\in\mathscr{X}$ need not exist. The existence of $\<x+y\>$ is equivalent to the existence of the covariation $\<x,y\>$ and is crucial for multidimensional pathwise It\=o calculus.

In this note, our goal is to construct rich classes of functions  with prescribed pathwise quadratic variation so that these functions can serve as test integrators for pathwise It\=o calculus. More precisely,  we will construct the following three classes of functions.
\begin{enumerate}[\rm(A)]
\item A vector space of functions $x$ with the (curved) quadratic variation $\<x \>_t=\int_0^tf^2(s)\,ds$ for all $t$, where $f$ is a certain Riemann integrable function  associated with $x$.
\item A vector space of functions $y$ with the (linear) quadratic variation $\<y\>_t=t\int_0^1f^2(s)\,ds$ for all $t$,  where $f$ is again a certain Riemann integrable function associated with $y$.
\item A class of functions $z$ with the \lq\lq local\rq\rq\ quadratic variation $\<z\>_t=\int_0^t\sigma^2(s,z(s))\,ds$ for some sufficiently regular \lq\lq volatility\rq\rq\ function $\sigma$.
\end{enumerate}
The class in (C) was first postulated and used by Bick and Willinger~\citep{BickWillinger}; see also~ \citep{Lyons95,SchiedVoloshchenkoArbitrage}. Our corresponding result now establishes   a path-by-path construction of such functions without the need to rely on  selection from the sample paths of a diffusion process.

Note that, unlike in the case of stochastic processes, it is not possible to construct the functions in (A) and (C) from functions with linear quadratic variation via time change, because a time-changed function will generally only admit a quadratic variation with respect to a time-changed  sequence of partitions. By contrast, our construction of the vector spaces in (A) and (B) relies on Proposition~\ref{Stolz prop}, which combines an observation by Gantert~\citep{GantertDiss,Gantert} with the Stolz--Cesaro theorem so as to characterize the existence of quadratic variation along the sequence of dyadic partitions by means of the convergence of certain Riemann sums. This argument yields the set in (A) relatively directly, while the set in (B) requires the additional use of an ergodic shift and Weyl's equidistribution theorem. We also prove that many functions in (A) and (B) are nowhere differentiable. The set in (C) will be constructed by solving a pathwise It\=o differential equation for the F\"ollmer integral by means of the Doss--Sussman method, where the It\=o differential equation is driven by a function from the set in (B). We will then show that the set in (C) is sufficiently rich in the sense that it is dense in $C[0,1]$ and that its members can connect any two points within any given time interval. This latter result can be regarded as a deterministic variant of a support theorem for diffusion processes as in~\citep{StroockVaradhanSupport}.

This paper is structured as follows. Preliminary definitions and results, including the above-mentioned Proposition~\ref{Stolz prop}, are collected in Section~\ref{prelim section}. The sets in (A) and (B) are constructed in Section~\ref{Riemann section}.  The existence and uniqueness theorem for pathwise It\=o differential equations, from which the functions in (C) can be obtained, and the corresponding \lq\lq support theorem\rq\rq\ are stated in Section~\ref{IDE section}. All proofs are deferred to Section~\ref{Proofs section}.

\section{Main results}\label{Main results Section}
\subsection{Preliminaries}\label{prelim section}
{A partition of the interval $[0,1]$ will be a finite set $\mathbb{T}=\{t_0,t_1,\dots,t_n\}$ such that $0=t_0<t_1<\cdots<t_n=1$.}
Now let $(\mathbb{T}_n)_{n\in\mathbb{N}}$ be an increasing sequence of partitions $\mathbb{T}_1\subset\mathbb{T}_2\subset\cdots$ of the interval $[0,1]$ such that the mesh of $\mathbb{T}_n$ tends to zero; such a sequence $(\mathbb{T}_n)_{n\in\mathbb{N}}$ will be called a {refining sequence of partitions}.   A typical example will be the sequence of dyadic partitions,
\begin{equation}\label{dyadic partition}
\mathbb{T}_n:=\{k2^{-n}\,|\,n\in\mathbb{N}, k=0,\dots, 2^n\},\qquad n=0,1,\dots
\end{equation}
 It will be convenient to denote by $s'$  the successor of $s$ in $\mathbb{T}_n$, i.e.,
$$s'=\begin{cases}\min\{t\in\mathbb{T}_n\,|\,t>s\}&\text{if $s<1$,}\\
1&\text{if $s=1$.}
\end{cases}
$$
For  $x\in C[0,1]$ one then defines the sequence
\begin{equation*}
  \<
x\>_{t}^n:= \sum_{s\in\mathbb{T}_n,\, s\le  t}
(x(s')-x(s))^2 .
\end{equation*}
We will say that $x$  admits the  \emph{quadratic variation} $\<x\>_t$ along  $(\mathbb{T}_n)$ and at $t\in[0,1]$ if the limit
\begin{align}\label{quadVarLimit}
\<x\>_{t}:=\lim_{n\uparrow\infty}\<x\>^n_{t}
\end{align}
exists. Since the sequence $\<x\>^n_{t}$ need not be monotone in $n$,  it is not clear \emph{a priori} whether the limit in~\eqref{quadVarLimit} exists   for any fixed $t\in[0,1]$. Moreover, even if the limit exists, it may depend strongly on the particular choice of the underlying sequence of partitions; see, e.g.,~\citep[p. 47]{Freedman} and~\citep[Proposition 2.7]{SchiedTakagi}.  For $x,y\in C[0,1]$, let
\begin{align*}
\<x,y\>_{t}^n:=\sum_{s\in\mathbb{T}_n,\, s\le t}(x(s')-x(s))(y(s')-y(s))
\end{align*}
and observe that
\begin{align}\label{polarization in approx}
\<x,y\>_{t}^n=\frac12\Big(\<x+y\>_{t}^n-\<x\>_{t}^n-\<y\>_{t}^n\Big).
\end{align}
If $x$ and $y$ admit the   quadratic variations $\<x\>_t$ and $\<y\>_t$, then it follows from~\eqref{polarization in approx} that
the \emph{covariation of $x$ and $y$},
\begin{align*}
\<x,y\>_{t}:=\lim_{n\uparrow\infty} \<x,y\>_{t}^n,
\end{align*}
exists at   $t$ if and only if $\<x+y\>_t$ exists.
This, however, need not be the case even if both $\<x\>_t$ and $\<y\>_t$ exist; see~\citep[Proposition 2.7]{SchiedTakagi} for an example. It follows in particular that the class of all functions $x$ that admit the  quadratic variation $\<x\>_t$  along $(\mathbb{T}_n)_{n\in\mathbb{N}}$ is not a vector space. The main goal of this paper will be to construct  sufficiently large classes of functions  with prescribed quadratic variation and such that $\<x,y\>_t$ exists for all $t$ and  all $x,y$ in this class so that this class is indeed a vector space.

As observed by Gantert~\citep{GantertDiss,Gantert}, the quadratic variation of a function $x\in C[0,1]$ along the sequence of dyadic partitions~\eqref{dyadic partition}
is closely related to its development in terms of the  \emph{Faber--Schauder functions}, which   are defined as
$$e_\emptyset(t):=t,\qquad e_{0,0}(t):=  {\max\{0,\min\{t,1-t\}\}},\qquad e_{n,k}(t):=2^{-n/2}e_{0,0}(2^n t-k)
$$
for $t\in\mathbb{R}$, $n=1,2,\dots$, and $k\in\mathbb{Z}$.
The graph of $e_{n,k}$  looks like a wedge with height $2^{-\frac {n+2}2}$,  width $2^{-n}$, and center at $t=(k+\frac12)2^{-n}$. In particular,  the functions $e_{n,k}$ have disjoint support   for  distinct $k$ and fixed $n$.
  It is well known that every $x\in C[0,1]$ can be uniquely represented  by means of the following uniformly convergent  series,
\begin{align}\label{Faber-Schauder representation of cont funct}
x=x(0)+ {(x(1)-x(0))}e_\emptyset+\sum_{m=0}^\infty\sum_{k=0}^{2^m-1}\theta_{m,k}e_{m,k},
\end{align}
where the coefficients $\theta_{m,k}$ are given as
$$\theta_{m,k}=2^{m/2}\bigg(2x\Big(\frac{2k+1}{2^{m+1}}\Big)-x\Big(\frac k{2^{m}}\Big)-x\Big(\frac{k+1}{2^{m}}\Big)\bigg);
$$
{see, e.g.,~\citep[p.3]{LindenstraussTzafriri}. Since in this note we are only dealing with functions on $[0,1]$, we will only need the Faber--Schauder functions $e_{n,k}$ for $k=0,\dots,2^n-1$ and their domain of definition can be restricted to $[0,1]$.} For $t=1$, the equivalence of conditions (a) and (b) in the following proposition was stated in~\citep[Lemma 1.1 (ii)]{Gantert}.

\begin{proposition}\label{Stolz prop}Let $x\in C[0,1]$ have  Faber--Schauder development~\eqref{Faber-Schauder representation of cont funct} and let $(\mathbb{T}_n)$ be the sequence of dyadic partitions. Then, for $t\in\bigcup_n\mathbb{T}_n$,  the following conditions are equivalent.
\begin{enumerate}[\rm(a)]
\item The quadratic variation $\<x\>_t$   exists.
\item The following limit exists,
$$\ell_1(t):=\lim_{n\uparrow\infty}\frac1{2^n}\sum_{m=0}^{n-1}\sum_{k=0}^{\lfloor (2^m-1)t\rfloor}\theta_{m,k}^2.
$$
\item The following limit exists,
\begin{equation*}
\ell_2(t):=\lim_{n\uparrow\infty}\frac1{2^n}\sum_{k=0}^{\lfloor (2^n-1)t\rfloor}\theta_{n,k}^2.
\end{equation*}

\end{enumerate}
In this case, we furthermore have
$\<x\>_t=\ell_1(t)=\ell_2(t)$.
\end{proposition}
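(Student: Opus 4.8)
The plan is to derive an exact one-step recursion relating the approximating sum $\langle x\rangle^n_t$ to $\langle x\rangle^{n-1}_t$ and the squared Faber--Schauder coefficients at level $n-1$, and then to deduce all three equivalences from a single elementary lemma on linear recursions. The key structural fact is that the $(n-1)$-st partial sum of the series~\eqref{Faber-Schauder representation of cont funct},
\begin{equation*}
 x_n:=x(0)+(x(1)-x(0))e_\emptyset+\sum_{m=0}^{n-1}\sum_{k=0}^{2^m-1}\theta_{m,k}e_{m,k},
\end{equation*}
is precisely the function interpolating $x$ linearly between consecutive points of $\mathbb{T}_n$: every $e_{m,k}$ with $m\le n-1$ has all its kinks in $\mathbb{T}_n$ and is therefore affine on each interval $[j2^{-n},(j+1)2^{-n}]$, while a short induction on $n$ (which is exactly what the displayed formula for $\theta_{m,k}$ encodes) shows that $x_n$ agrees with $x$ on $\mathbb{T}_n$. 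In particular, increments of $x$ across level-$n$ intervals coincide with those of $x_n$.

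Next I would fix $t\in\bigcup_n\mathbb{T}_n$, so that $t\in\mathbb{T}_{n-1}$ for all large $n$, and for such $n$ group the level-$n$ intervals contained in $[0,t]$ into the level-$(n-1)$ intervals $I_\ell=[\ell2^{-(n-1)},(\ell+1)2^{-(n-1)}]$ that they refine. On $I_\ell$ the function $x_{n-1}$ is affine with total increment $\Delta_\ell:=x((\ell+1)2^{-(n-1)})-x(\ell2^{-(n-1)})$, and passing from $x_{n-1}$ to $x_n$ adds only the term $\theta_{n-1,\ell}e_{n-1,\ell}$, whose increments over the left and right halves of $I_\ell$ are $+2^{-(n+1)/2}\theta_{n-1,\ell}$ and $-2^{-(n+1)/2}\theta_{n-1,\ell}$. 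Hence the two level-$n$ increments of $x$ inside $I_\ell$ equal $\tfrac12\Delta_\ell\pm2^{-(n+1)/2}\theta_{n-1,\ell}$, and, because $(a+b)^2+(a-b)^2=2a^2+2b^2$ makes the cross term vanish, their squares sum to $\tfrac12\Delta_\ell^2+2^{-n}\theta_{n-1,\ell}^2$. Summing over the relevant $\ell$, and absorbing into an $o(1)$ term the boundary corrections caused by the convention $s\le t$ in the definition of the approximating sums (each such correction is a single squared increment $(x(t+2^{-k})-x(t))^2$, which tends to $0$ by continuity), one obtains for all large $n$
\begin{equation*}
 \langle x\rangle^n_t=\tfrac12\langle x\rangle^{n-1}_t+\tfrac12\,b_{n-1}(t)+o(1),\qquad b_m(t):=\frac1{2^m}\sum_{k=0}^{\lfloor(2^m-1)t\rfloor}\theta_{m,k}^2,
\end{equation*}
where $b_m(t)$ is the sequence appearing in~(c). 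I expect the one genuinely fiddly point to be the index bookkeeping here, namely the verification that the number of complete level-$(n-1)$ intervals in $[0,t]$ equals $\lfloor(2^{n-1}-1)t\rfloor+1$ for every dyadic $t$ (the degenerate case $t=0$, where every quantity tends to $0$, being treated separately).

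The final ingredient is the elementary lemma: if real sequences satisfy $c_n=\tfrac12c_{n-1}+\tfrac12d_{n-1}+o(1)$ for all large $n$, then $(c_n)$ converges if and only if $(d_n)$ does, and in that case $\lim c_n=\lim d_n$ --- the ``if'' direction being a routine consequence of iterating the recursion, the ``only if'' direction being immediate from $d_{n-1}=2c_n-c_{n-1}+o(1)$. Applying it with $c_n=\langle x\rangle^n_t$ and $d_n=b_n(t)$, the displayed recursion yields the equivalence of~(a) and~(c) together with $\langle x\rangle_t=\ell_2(t)$. For the equivalence of~(b) and~(c) one checks the purely algebraic identity $a_{n+1}(t)=\tfrac12a_n(t)+\tfrac12b_n(t)$ for the sequence $a_m(t):=\frac1{2^m}\sum_{j=0}^{m-1}\sum_{k=0}^{\lfloor(2^j-1)t\rfloor}\theta_{j,k}^2$ from~(b), and the same lemma gives (b) $\Leftrightarrow$ (c) together with $\ell_1(t)=\ell_2(t)$; this last step is a quantitative form of the Stolz--Cesaro argument mentioned in the introduction. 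Chaining the equivalences and the identifications of the limits completes the proof.
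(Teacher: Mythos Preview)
Your proof is correct and takes a somewhat different route from the paper's. For (b)$\Leftrightarrow$(c) the two approaches coincide: your recursion lemma applied to the exact identity $a_{n+1}=\tfrac12 a_n+\tfrac12 b_n$ is precisely the Stolz--Ces\`aro theorem together with its converse (the paper's Lemma~\ref{Stolz-Cesaro converse lemma}), specialized to the denominator sequence $2^n$. Where you genuinely diverge is in linking (a) to the Schauder coefficients. The paper first localizes: it replaces $x$ by the stopped function $x^t$ and then by a function $\widetilde x$ with truncated coefficients, using Lemma~\ref{zero qv lemma} to absorb the piecewise-linear discrepancy, thereby reducing to the endpoint $t=1$, where it quotes Gantert's exact formula $\langle\widetilde x\rangle_1^n=a_n$. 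You instead derive the one-step recursion $\langle x\rangle^n_t=\tfrac12\langle x\rangle^{n-1}_t+\tfrac12 b_{n-1}(t)+o(1)$ by an explicit increment computation at level $n$ and feed it into the same recursion lemma, obtaining (a)$\Leftrightarrow$(c) directly. Your argument is more self-contained---no localization lemma, no external reference to Gantert---at the price of carrying the $o(1)$ boundary corrections through; iterating your recursion would in fact recover Gantert's identity up to those same boundary terms, so the two proofs are different packagings of the same underlying computation.
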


\begin{remark}\label{covariation remark}
  Let $y\in C[0,1]$ have the Faber--Schauder development
  \begin{equation}\label{y Schauder development}
  y=y(0)+{(y(1)-y(0))}e_\emptyset+\sum_{m=0}^\infty\sum_{k=0}^{2^m-1}\vartheta_{m,k}e_{m,k}.
  \end{equation}
Then it follows from Proposition~\ref{Stolz prop} and polarization~\eqref{polarization in approx} that the covariation $\<x,y\>_t$ exists along the sequence of dyadic partitions and for $t\in\bigcup_n\mathbb{T}_n$ if and only if the limit
 $$\ell(t)=\lim_{n\uparrow\infty}\frac1{2^{n}}\sum_{k=0}^{\lfloor (2^n-1)t\rfloor}\theta_{n,k}\vartheta_{n,k}$$
 exists and that, in this case, $\langle x,y\rangle_t=\ell (t)$.
 \end{remark}

 {We emphasize that the formulas for $\<x\>_t$ and $\<x,y\>_t$ obtained in Proposition~\ref{Stolz prop} and Remark~\ref{covariation remark} are only valid if   quadratic variation is considered along the sequence $(\mathbb{T}_n)$ of dyadic partitions~\eqref{dyadic partition}, because it is naturally related to the Faber--Schauder development of continuous functions. In principle,   it should be possible to obtain similar results also for other wavelet expansions, which would correspond to other sequences of partitions, such as general $p$-adic partitions. Such an analysis is, however, beyond the scope of this paper.}

 \begin{remark} Let $x,y \in C[0,1]$ have Faber--Schauder developments {\eqref{Faber-Schauder representation of cont funct} and~\eqref{y Schauder development}} with coefficients  $\theta_{n,k},\vartheta_{n,k}\in\{-1,+1\}$ for all $n$ and $k$.  Then
  $$\frac1{2^{n}}\sum_{k=0}^{\lfloor (2^n-1)t\rfloor}\theta_{n,k}\vartheta_{n,k}
              =\frac{{\lfloor (2^n-1)t\rfloor+1}}{2^n}-\frac2{2^{n}}\sum_{k=0}^{\lfloor (2^n-1)t\rfloor}1_{\{\theta_{n,k}\neq\vartheta_{n,k}\}}=\frac{{\lfloor (2^n-1)t\rfloor+1}}{2^n}-2\nu_n(t),$$
        where
        $$\nu_n(t):=\frac{1}{2^{n}}\text{card}\big\{0\leq k\leq \lfloor(2^n-1)t\rfloor\,\big|\,\theta_{n,k}\neq\vartheta_{n,k}\big\}$$
        is the frequency of non-coincidence. Since  ${\lfloor (2^n-1)t\rfloor}{2^{-n}}\to t$, it follows that $\<x,y\>_t$ exists if and only if the frequency $\nu_n(t)$ converges to a limit $ \nu(t)\in[0,1]$.
\end{remark}

\subsection{Constructing vector spaces of functions with prescribed curved and linear quadratic variation}\label{Riemann section}

We start by  constructing a vector space of functions with curved quadratic variation along the sequence~\eqref{dyadic partition} of dyadic partitions.  To this end, we let $\mathscr{F}$ denote the class of all sequences $\bm f=(f_n)_{n=0,1,\dots}$ of bounded functions $f_n:[0,1]\to\mathbb{R}$  that converge uniformly to a Riemann integrable function $f_\infty:=\lim_nf_n$.
For $\bm f\in\mathscr{F}$, we define
$$\theta_{n,k}(\bm f):=f_n(k2^{-n})
$$
and
$$x_{\bm f}:=\sum_{n=0}^\infty\sum_{k=0}^{2^n-1}\theta_{n,k}(\bm f)e_{n,k}.
$$
The preceding sum converges absolutely since the coefficients $\theta_{n,k}(\bm f)$ are uniformly bounded. As a matter of fact,  the boundedness of the coefficients implies that $x_{\bm f}$ is even H\"older continuous with exponent $1/2$; see
~\citep[Theorem 1]{Ciesielski}. {It follows in particular that the class
$\{x_{\bm f}\,|\,\bm f\in\mathscr{F}\}$
does not contain the typical sample paths of a continuous semimartingale with nonvanishing quadratic variation. }

\begin{proposition}\label{first Riemann prop}Let $(\mathbb{T}_n)$ be the sequence~\eqref{dyadic partition} of dyadic partitions. Then the following assertions hold.
\begin{enumerate}[\rm(a)]
\item If $\bm f\in\mathscr{F}$, then $x_{\bm f}$ admits the continuous quadratic variation $\<x_{\bm f}\>_t=\int_0^tf^2_\infty(s)\,ds$ for all $t\in[0,1]$.
\item If $\bm f,\bm g\in\mathscr{F}$, then $x_{\bm f}$ and $x_{\bm g}$ admit the continuous covariation $\<x_{\bm f},x_{\bm g}\>_t=\int_0^tf_\infty(s)g_\infty(s)\,ds$ for all $t\in[0,1]$.
\end{enumerate}
In particular, the class
$\{x_{\bm f}\,|\,\bm f\in\mathscr{F}\}$
is a vector space of functions admitting a continuous quadratic variation.
\end{proposition}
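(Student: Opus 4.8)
The plan is to prove part~(a) first at dyadic times $t$ by invoking Proposition~\ref{Stolz prop}, then to extend it to every $t\in[0,1]$ through a monotonicity argument, and finally to obtain part~(b) and the vector-space claim from linearity and the polarization identity~\eqref{polarization in approx}. For Step~1, note that $x_{\bm f}(0)=x_{\bm f}(1)=0$, so by uniqueness of the Faber--Schauder expansion the coefficients of $x_{\bm f}$ are exactly $\theta_{n,k}(\bm f)=f_n(k2^{-n})$, and Proposition~\ref{Stolz prop}(c) reduces the existence and value of $\<x_{\bm f}\>_t$ for $t\in\bigcup_n\mathbb{T}_n$ to the convergence
\[
\frac1{2^n}\sum_{k=0}^{\lfloor(2^n-1)t\rfloor}f_n(k2^{-n})^2\;\longrightarrow\;\int_0^tf_\infty^2(s)\,ds .
\]
Here I would first use that $\bm f$ and $f_\infty$ are uniformly bounded, say by $M$, so that $|f_n(k2^{-n})^2-f_\infty(k2^{-n})^2|\le 2M\|f_n-f_\infty\|_\infty$; since the sum has at most $2^n$ terms and $\|f_n-f_\infty\|_\infty\to 0$, one may replace $f_n$ by $f_\infty$ with a vanishing error. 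It then remains to recognize $2^{-n}\sum_{k=0}^{\lfloor(2^n-1)t\rfloor}f_\infty(k2^{-n})^2$ as a Riemann sum converging to $\int_0^tf_\infty^2$: this is where Riemann integrability of $f_\infty$ is used, as then $f_\infty^2$, and hence also $h:=f_\infty^2\mathbbm{1}_{[0,t]}$, is bounded and Riemann integrable on $[0,1]$ with $\int_0^1h=\int_0^tf_\infty^2$; applying the elementary convergence of uniform Riemann sums to $h$ and absorbing the $O(2^{-n})$ discrepancy between $\lfloor(2^n-1)t\rfloor$ and $\lfloor 2^nt\rfloor$ finishes Step~1.

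For Step~2, observe that for fixed $n$ the map $t\mapsto\<x_{\bm f}\>^n_t$ is non-decreasing (it is a sum of squares, and the index set $\{s\in\mathbb{T}_n:s\le t\}$ grows with $t$) and that, by Step~1, it converges on the dense set $\bigcup_n\mathbb{T}_n$ to the Lipschitz --- in particular continuous --- function $F(t):=\int_0^tf_\infty^2(s)\,ds$. Given $t\in[0,1]$ and $\varepsilon>0$, pick dyadic $p<t<q$ with $F(q)-F(p)<\varepsilon$; for all $n$ large enough that $p,q\in\mathbb{T}_n$, monotonicity in $t$ gives $\<x_{\bm f}\>^n_p\le\<x_{\bm f}\>^n_t\le\<x_{\bm f}\>^n_q$, so that, letting $n\to\infty$ and using Step~1, $F(p)\le\liminf_n\<x_{\bm f}\>^n_t\le\limsup_n\<x_{\bm f}\>^n_t\le F(q)$. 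Since $F(p)$ and $F(q)$ both lie within $\varepsilon$ of $F(t)$ and $\varepsilon>0$ is arbitrary, $\<x_{\bm f}\>_t$ exists and equals $F(t)$; continuity of $t\mapsto\<x_{\bm f}\>_t$ is inherited from that of $F$.

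For Step~3, the definition of the coefficients and the linearity of the Faber--Schauder expansion give $c\,x_{\bm f}+x_{\bm g}=x_{c\bm f+\bm g}$ for any scalar $c$, where $c\bm f+\bm g:=(cf_n+g_n)_n$; since a sum and a uniform limit of bounded (resp. Riemann integrable) functions are again of that kind, $c\bm f+\bm g\in\mathscr{F}$. Hence $\{x_{\bm f}\,|\,\bm f\in\mathscr{F}\}$ is a linear subspace of $C[0,1]$, and by part~(a) every member has a continuous quadratic variation. For part~(b), apply part~(a) to $\bm f$, $\bm g$ and $\bm f+\bm g\in\mathscr{F}$ and substitute into~\eqref{polarization in approx}: the three quadratic variations on the right converge, hence so does $\<x_{\bm f},x_{\bm g}\>^n_t$, with limit $\tfrac12\big(\int_0^t(f_\infty+g_\infty)^2-\int_0^tf_\infty^2-\int_0^tg_\infty^2\big)=\int_0^tf_\infty g_\infty$, which is continuous in $t$.

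The only genuinely delicate point is Step~2: Proposition~\ref{Stolz prop} gives information only at dyadic times, while the sequence $\<x_{\bm f}\>^n_t$ is not monotone in $n$, so one must combine the monotonicity of each $\<x_{\bm f}\>^n_\cdot$ in $t$ with the continuity of the candidate limit $F$ to force convergence at every $t$. The manipulations in Step~1 (the uniform-convergence replacement of $f_n$ by $f_\infty$ and the identification of a Riemann sum despite the slightly awkward summation limit $\lfloor(2^n-1)t\rfloor$) are routine, and Step~3 is bookkeeping once part~(a) is available.
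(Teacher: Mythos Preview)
Your proposal is correct and follows essentially the same approach as the paper: reduce via Proposition~\ref{Stolz prop} to the convergence of the sums $2^{-n}\sum_k f_n(k2^{-n})^2$, replace $f_n$ by $f_\infty$ using uniform convergence, identify a Riemann sum, pass from dyadic $t$ to arbitrary $t$ via monotonicity of $t\mapsto\<x_{\bm f}\>^n_t$ together with continuity of the limit, and obtain (b) by polarization. Your Step~2 merely spells out in detail the sandwich argument that the paper compresses into a single sentence.
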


{See Figure~\ref{x fig} for an illustration of functions in $ x_{\bm f}$.} We continue with a non-differentiability result.
Recall that, by Lebesgue's criterion~\citep[Theorem 11.33]{RudinPrinciples}, a function $f:[0,1]\to\mathbb{R}$ is Riemann integrable if and only if it is bounded and continuous almost everywhere.

\begin{proposition}\label{nondiff prop}For any $\bm f\in\mathscr{F}$, the function  $x_{\bm f}$ is not differentiable at any continuity point, $t$, of $f_\infty$ for which $f_\infty(t)\neq0$. In particular, $x_{\bm f}$ is not differentiable almost everywhere on $\{f_\infty\neq0\}$.
\end{proposition}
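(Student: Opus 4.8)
The plan is to detect non-differentiability of $x_{\bm f}$ at a continuity point $t$ of $f_\infty$ with $f_\infty(t)\neq 0$ by means of a ``second difference'' of $x_{\bm f}$ over the level-$n$ dyadic interval around $t$: this quantity turns out to have exact order $2^{-n/2}$, whereas differentiability at $t$ would force it to be $o(2^{-n})$. For $n\ge 1$ put $k_n:=\min\{\lfloor 2^nt\rfloor,\,2^n-1\}$ (the minimum only matters when $t=1$, and keeps the index in range), so that $t$ lies in the level-$n$ dyadic interval $[a_n,b_n]$ with $a_n:=k_n2^{-n}$ and $b_n:=(k_n+1)2^{-n}$; let $c_n:=\tfrac12(a_n+b_n)$ be its midpoint, and set
\[
\Delta_n:=x_{\bm f}(a_n)-2x_{\bm f}(c_n)+x_{\bm f}(b_n).
\]
I would first compute $\Delta_n$ from the absolutely convergent expansion $x_{\bm f}=\sum_{m,k}\theta_{m,k}(\bm f)e_{m,k}$, the claim being that only the single term $e_{n,k_n}$ survives. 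Indeed, for $m\le n-1$ each $e_{m,k}$ is affine on $[a_n,b_n]$ (its breakpoints are integer multiples of $2^{-(m+1)}$, none of which lies in the open interval $(a_n,b_n)$), so it contributes $0$ to a second difference centered at $c_n$; for $m\ge n+1$ each $e_{m,k}$ vanishes at all of $a_n,b_n,c_n$, since $2^m a_n,\,2^m b_n,\,2^m c_n\in\mathbb{Z}$ and $e_{0,0}$ vanishes on $\mathbb{Z}$; and every level-$n$ function vanishes at the endpoints $a_n,b_n$, while only $e_{n,k_n}$ is nonzero at $c_n$, with $e_{n,k_n}(c_n)=2^{-n/2}e_{0,0}(\tfrac12)=2^{-(n+2)/2}$. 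Subtracting the convergent series for $x_{\bm f}(a_n)$, $x_{\bm f}(c_n)$, $x_{\bm f}(b_n)$ termwise then gives
\[
\Delta_n=-2^{-n/2}\,\theta_{n,k_n}(\bm f)=-2^{-n/2}f_n(k_n2^{-n}).
\]
Since $k_n2^{-n}\to t$, $f_n\to f_\infty$ uniformly, and $f_\infty$ is continuous at $t$, we have $f_n(k_n2^{-n})\to f_\infty(t)$, hence $|\Delta_n|\ge\tfrac12|f_\infty(t)|\,2^{-n/2}$ for all large $n$.

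I would then invoke the elementary fact that differentiability of $x_{\bm f}$ at $t$, say with derivative $L$, forces $\Delta_n=o(2^{-n})$: writing $x_{\bm f}(s)=x_{\bm f}(t)+L(s-t)+\rho(s)$ with $\rho(s)=o(|s-t|)$ as $s\to t$, the constant and linear parts contribute nothing to $\Delta_n$ because $c_n=\tfrac12(a_n+b_n)$, so $\Delta_n=\rho(a_n)-2\rho(c_n)+\rho(b_n)$ with $|a_n-t|,|b_n-t|,|c_n-t|\le 2^{-n}$, whence $2^{n/2}|\Delta_n|\to 0$. This contradicts the lower bound $|\Delta_n|\ge\tfrac12|f_\infty(t)|\,2^{-n/2}$, so $x_{\bm f}$ is not differentiable at $t$. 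For the final assertion, Lebesgue's criterion shows that the set $D$ of discontinuity points of the Riemann integrable function $f_\infty$ is Lebesgue-null; by the first part, every point of $\{f_\infty\neq0\}$ at which $x_{\bm f}$ is differentiable must lie in $D$, so $x_{\bm f}$ is non-differentiable almost everywhere on $\{f_\infty\neq0\}$.

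The one step requiring genuine care is the evaluation of $\Delta_n$ — checking that the lower-level Faber--Schauder functions are affine across $[a_n,b_n]$, that the higher-level ones vanish at $a_n,c_n,b_n$, and that exactly one level-$n$ function contributes, with coefficient $f_n(k_n2^{-n})$. This is precisely where the scale $2^{-n/2}$ (as opposed to $2^{-n}$) is produced, and it is the crux of the argument. The termwise subtraction of the three series is licensed by the absolute convergence already noted after the definition of $x_{\bm f}$, and the implication ``differentiable at $t$'' $\Rightarrow$ ``$\Delta_n=o(2^{-n})$'' is routine.
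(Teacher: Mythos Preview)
Your proof is correct. It is close in spirit to the paper's argument but executes the key step differently. The paper considers the first difference quotients $d_n:=2^n\bigl(x_{\bm f}(s_n')-x_{\bm f}(s_n)\bigr)$ over the level-$n$ dyadic interval containing $t$ and derives a recursion $d_n=d_{n-1}+f_n(s_n)\,2^{(n-1)/2}$, so that $|d_n-d_{n-1}|\to\infty$, contradicting the convergence of $(d_n)$ that differentiability at $t$ would force. Your second-difference $\Delta_n$ isolates the single level-$n$ Faber--Schauder coefficient in one stroke, without setting up a recursion; in fact the two are directly linked, since (depending on which half of $[a_n,b_n]$ contains $t$) one has $d_{n+1}-d_n=\pm 2^n\Delta_n$. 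Your route is slightly more streamlined: the second difference automatically kills the affine lower-level contributions and yields the exact value $\Delta_n=-2^{-n/2}f_n(a_n)$, after which the obstruction to differentiability is immediate. The paper's recursion, on the other hand, stays closer to the classical de Rham argument it cites and makes the telescoping structure of the Faber--Schauder expansion explicit.
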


Now we will construct a rich vector space of functions possessing a linear quadratic variation. It was shown in~\citep{SchiedTakagi} that all functions $x$ whose Faber--Schauder coefficients take only the values $\pm1$  have the linear quadratic variation $\<x\>_t=t$ for all $t$, but the corresponding class, $\mathscr{X}$, is not a vector space. For our construction, we let
$$t\text{\,mod\,} 1:=t-\lfloor t\rfloor $$
denote the fractional part of $t\ge0$. For $\bm f\in\mathscr{F}$ and $\alpha>0$, we define
$$\vartheta_{n,k}(\alpha,\bm f):=f_n\big(\alpha k\text{\,mod\,} 1 \big)
$$
and
$$y_\alpha^{\bm f}:=\sum_{n=0}^\infty\sum_{k=0}^{2^n-1}\vartheta_{n,k}(\alpha,\bm f)e_{n,k}.
$$
Again, the preceding sum converges absolutely, as all coefficients $\vartheta_{n,k}(\alpha,\bm f)$ are bounded. Moreover, $y_\alpha^{\bm f}$ is H\"older continuous with exponent $1/2$.

\begin{proposition}\label{Weyl prop}Let $(\mathbb{T}_n)$ be the sequence  of dyadic partitions and  $\alpha>0$ be irrational and fixed. Then the following assertions hold.
\begin{enumerate}[\rm(a)]
\item If $\bm f\in\mathscr{F}$, then $y_\alpha^{\bm f}$ admits the linear quadratic variation $\langle y_\alpha^{\bm f}\rangle_t=t\int_0^1f_\infty^2(s)\,ds$ for $t\in[0,1]$.
\item  If $\bm f,\bm g\in\mathscr{F}$, then  $y_\alpha^{\bm f}$ and $y_\alpha^{\bm g}$  admit the linear covariation $\langle y_\alpha^{\bm f},y_\alpha^{\bm g}\rangle_t=t\int_0^1f_\infty(s)g_\infty(s)\,ds$ for $t\in[0,1]$.
\end{enumerate}
In particular, the class
$\{y_\alpha^{\bm f}\,|\,\bm f\in\mathscr{F}\}$
is a vector space of functions admitting a linear quadratic variation.
\end{proposition}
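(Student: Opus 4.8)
The plan is to derive everything from Proposition~\ref{Stolz prop}, Remark~\ref{covariation remark}, and Weyl's equidistribution theorem. First I would record that, since $e_{n,k}(0)=e_{n,k}(1)=0$ for all $n,k$, one has $y_\alpha^{\bm f}(0)=y_\alpha^{\bm f}(1)=0$, so the (unique) Faber--Schauder development~\eqref{Faber-Schauder representation of cont funct} of $y_\alpha^{\bm f}$ has vanishing constant term, vanishing $e_\emptyset$--coefficient, and coefficients $\theta_{n,k}=\vartheta_{n,k}(\alpha,\bm f)=f_n(\alpha k\text{\,mod\,}1)$. Hence Proposition~\ref{Stolz prop}, applied with $x=y_\alpha^{\bm f}$ and its condition~(c), says that for $t\in\bigcup_n\mathbb{T}_n$ the quadratic variation $\<y_\alpha^{\bm f}\>_t$ exists if and only if
$$\ell(t):=\lim_{n\uparrow\infty}\frac1{2^n}\sum_{k=0}^{\lfloor(2^n-1)t\rfloor}f_n(\alpha k\text{\,mod\,}1)^2$$
exists, and then $\<y_\alpha^{\bm f}\>_t=\ell(t)$. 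So the heart of part~(a) is the identity $\ell(t)=t\int_0^1 f_\infty^2(s)\,ds$ for dyadic $t$.

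To prove this, fix $t=j2^{-N}\in\bigcup_n\mathbb{T}_n$ with $0<t\le1$. For $n\ge N$ the number $M_n:=2^nt$ is a positive integer, $M_n\uparrow\infty$, and $\lfloor(2^n-1)t\rfloor=M_n-1$, so that
$$\frac1{2^n}\sum_{k=0}^{\lfloor(2^n-1)t\rfloor}f_n(\alpha k\text{\,mod\,}1)^2=t\cdot\frac1{M_n}\sum_{k=0}^{M_n-1}f_n(\alpha k\text{\,mod\,}1)^2.$$
Since the $f_n$ are uniformly bounded and $f_n\to f_\infty$ uniformly, we have $\|f_n^2-f_\infty^2\|_\infty\to0$, and as the average on the right has at most $M_n$ summands it differs from $\frac1{M_n}\sum_{k=0}^{M_n-1}f_\infty(\alpha k\text{\,mod\,}1)^2$ by at most $\|f_n^2-f_\infty^2\|_\infty$. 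Now $f_\infty^2$ is bounded and continuous almost everywhere (every continuity point of $f_\infty$ is one of $f_\infty^2$), hence Riemann integrable by Lebesgue's criterion, so — $\alpha$ being irrational — Weyl's equidistribution theorem in the form valid for Riemann integrable test functions gives $\frac1M\sum_{k=0}^{M-1}f_\infty(\alpha k\text{\,mod\,}1)^2\to\int_0^1 f_\infty^2(s)\,ds$ as $M\to\infty$; restricting to the subsequence $M=M_n$ yields $\ell(t)=t\int_0^1 f_\infty^2(s)\,ds$. The case $t=0$ is trivial. Thus $\<y_\alpha^{\bm f}\>_t=t\int_0^1 f_\infty^2(s)\,ds$ for every $t\in\bigcup_n\mathbb{T}_n$.

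To pass to an arbitrary $t\in[0,1]$ I would use monotonicity: each $t\mapsto\<y_\alpha^{\bm f}\>_t^n$ is non-decreasing, and these functions converge on the dense set $\bigcup_n\mathbb{T}_n\ni0,1$ to the continuous (linear) function $t\mapsto t\int_0^1 f_\infty^2(s)\,ds$; by the classical fact that monotone functions converging to a continuous limit on a dense set converge to that limit everywhere, it follows that $\<y_\alpha^{\bm f}\>_t^n\to t\int_0^1 f_\infty^2(s)\,ds$ for all $t$, which is~(a). For~(b), observe that $\bm f+\bm g$ and $c\bm f$ lie in $\mathscr{F}$, that $\vartheta_{n,k}(\alpha,\bm f+\bm g)=\vartheta_{n,k}(\alpha,\bm f)+\vartheta_{n,k}(\alpha,\bm g)$ and $\vartheta_{n,k}(\alpha,c\bm f)=c\,\vartheta_{n,k}(\alpha,\bm f)$, and hence — by absolute convergence of the defining series — that $y_\alpha^{\bm f}+y_\alpha^{\bm g}=y_\alpha^{\bm f+\bm g}$ and $cy_\alpha^{\bm f}=y_\alpha^{c\bm f}$; in particular $\{y_\alpha^{\bm f}\mid\bm f\in\mathscr{F}\}$ is a vector space. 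Applying~(a) to $\bm f$, $\bm g$ and $\bm f+\bm g$, all three quadratic variations exist at every $t$, so by polarization~\eqref{polarization in approx} the covariation exists at every $t$ and equals $\tfrac12\big(t\int_0^1(f_\infty+g_\infty)^2-t\int_0^1 f_\infty^2-t\int_0^1 g_\infty^2\big)=t\int_0^1 f_\infty(s)g_\infty(s)\,ds$, the relevant functions being Riemann integrable again by Lebesgue's criterion. The main obstacle is the middle step — seeing that the Riemann sums over the orbit $(\alpha k\text{\,mod\,}1)_k$ are governed by Weyl equidistribution and handling the replacement of $f_n$ by $f_\infty$ uniformly; the monotone extension to non-dyadic $t$ and the polarization argument are routine.
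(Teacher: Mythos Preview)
Your proof is correct and follows essentially the same route as the paper: reduce to dyadic $t$ via the monotonicity of $t\mapsto\langle y_\alpha^{\bm f}\rangle_t^n$ and continuity of the target, invoke Proposition~\ref{Stolz prop}~(c), factor the sum as $t$ times an ergodic average, apply Weyl's equidistribution theorem to $f_\infty^2$, absorb the replacement of $f_n$ by $f_\infty$ using uniform convergence, and then obtain~(b) by polarization. Your write-up is in fact more explicit than the paper's (which outsources the reduction to dyadic~$t$ and part~(b) to the proof of Proposition~\ref{first Riemann prop}), but the underlying argument is the same.
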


{An illustration of functions $y_\alpha^{\mathbf{f}}$ for various choices of $\alpha$ and $\mathbf{f}$ is given in Figure~\ref{y fig}. When comparing  Figures~\ref{x fig} and~\ref{y fig}, one can see that the  functions $y_\alpha^{\mathbf{f}}$ exhibit a lower degree of regularity and look more \lq\lq random" than the functions $x_{\mathbf{f}}$. This effect is due to the ergodic behavior of the shift $x\mapsto (x+\alpha)\text{\,mod\,}  1$, which underlies the coefficients $\vartheta_{n,k}(\alpha,\mathbf {f})$.}
The following non-differentiability result can be proved in the  same way as Proposition~\ref{nondiff prop}.

\begin{proposition}Suppose the $\alpha>0$ is irrational and $\bm f\in\mathscr{F}$ is such that $|f_\infty|$ is bounded away from zero. Then $y_\alpha^{\bm f}$ is nowhere differentiable.
\end{proposition}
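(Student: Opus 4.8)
The plan is to run the argument behind Proposition~\ref{nondiff prop} almost verbatim, the only difference being that the hypothesis that $|f_\infty|$ is bounded away from zero upgrades its local conclusion to a global one. The starting point is the standard fact, which is precisely the mechanism used in Proposition~\ref{nondiff prop}, that the Faber--Schauder coefficients of a function differentiable at a point must be small there. Concretely: if $z\in C[0,1]$ has Faber--Schauder coefficients $(\theta_{m,k})$ and is differentiable at $t\in[0,1]$, then $\theta_{m,k_m}\to 0$, where for each $m$ one picks $k_m\in\{0,\dots,2^m-1\}$ with $t$ in the support $[k_m2^{-m},(k_m+1)2^{-m}]$ of $e_{m,k_m}$ (for $t=1$ take $k_m=2^m-1$). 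Indeed, set $b=k_m2^{-m}$, $c=(k_m+1)2^{-m}$, $a=\tfrac{2k_m+1}{2^{m+1}}=\tfrac12(b+c)$, and $\varphi(s):=z(s)-z(t)-z'(t)(s-t)=o(|s-t|)$. Then
\begin{equation*}
2z(a)-z(b)-z(c)=z'(t)\,(2a-b-c)+2\varphi(a)-\varphi(b)-\varphi(c)=2\varphi(a)-\varphi(b)-\varphi(c),
\end{equation*}
and since $a,b,c$ all lie within $2^{-m}$ of $t$ the right-hand side is $o(2^{-m})$; by the formula for the Faber--Schauder coefficients, $\theta_{m,k_m}=2^{m/2}\bigl(2z(a)-z(b)-z(c)\bigr)=o(2^{-m/2})\to 0$.

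Apply this to $z=y_\alpha^{\bm f}$. Since every $e_{n,k}$ vanishes at $0$ and at $1$, we have $y_\alpha^{\bm f}(0)=y_\alpha^{\bm f}(1)=0$, so by uniqueness of the Faber--Schauder development the coefficients of $y_\alpha^{\bm f}$ are exactly $\vartheta_{n,k}(\alpha,\bm f)=f_n(\alpha k\text{\,mod\,}1)$. By hypothesis there is $c_0>0$ with $|f_\infty|\ge c_0$ on $[0,1]$, and since $f_m\to f_\infty$ uniformly there is $M$ with $\|f_m-f_\infty\|_\infty<c_0/2$, hence $|f_m|>c_0/2$ on $[0,1]$, for all $m\ge M$. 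Thus $|\vartheta_{m,k_m}(\alpha,\bm f)|=|f_m(\alpha k_m\text{\,mod\,}1)|>c_0/2$ for all $m\ge M$, contradicting $\vartheta_{m,k_m}(\alpha,\bm f)\to 0$. Hence $y_\alpha^{\bm f}$ is not differentiable at $t$; as $t\in[0,1]$ was arbitrary, $y_\alpha^{\bm f}$ is nowhere differentiable.

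I do not anticipate a real obstacle. The only place that needs a touch of care is the bookkeeping at dyadic and endpoint values of $t$, where $t$ sits at an endpoint of the support $[k_m2^{-m},(k_m+1)2^{-m}]$ rather than in its interior; there one simply uses the appropriate one-sided Taylor expansion of $z$ at $t$, which still follows from differentiability, so that $\varphi(s)=o(|s-t|)$ remains valid for the relevant $s$. It is also worth noting that irrationality of $\alpha$ is not actually used -- the coefficients $\vartheta_{n,k}(\alpha,\bm f)$ are eventually bounded away from zero regardless of how $\alpha k\text{\,mod\,}1$ behaves -- and the hypothesis is retained only to align with the setting of Proposition~\ref{Weyl prop}.
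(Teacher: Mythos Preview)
Your proof is correct, and since the paper gives no separate argument here but simply says the result ``can be proved in the same way as Proposition~\ref{nondiff prop},'' your execution is in the same spirit. The only cosmetic difference is packaging: the paper's proof of Proposition~\ref{nondiff prop} works with the two-point dyadic difference quotients $d_n=2^n(x(s_n')-x(s_n))$ and derives the recursion $d_n-d_{n-1}=\pm 2^{(n-1)/2}\theta_{n-1,k_{n-1}}$, whereas you isolate the equivalent fact as the clean standalone lemma ``differentiability at $t$ forces $\theta_{m,k_m}\to 0$'' via the three-point Taylor identity $2z(a)-z(b)-z(c)=2\varphi(a)-\varphi(b)-\varphi(c)$. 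These are literally the same computation (one checks $d_n-d_{n-1}=2^{n-1}(2x(a)-x(b)-x(c))$ when $\{a,b,c\}=\{s_n,s_n',s_{n-1},s_{n-1}'\}$), so your phrase ``precisely the mechanism'' is fair. Your observation that the irrationality of $\alpha$ plays no role in the nowhere-differentiability is also correct; it is only needed for the quadratic-variation statement via Weyl's theorem.
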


\begin{figure}
\begin{center}
\begin{minipage}[b]{8cm}
\includegraphics[width=8cm]{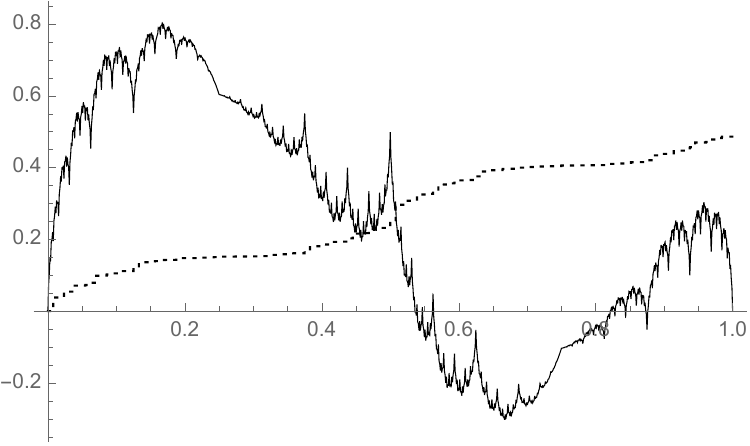}\\
\end{minipage}\qquad
\begin{minipage}[b]{8cm}
\includegraphics[width=8cm]{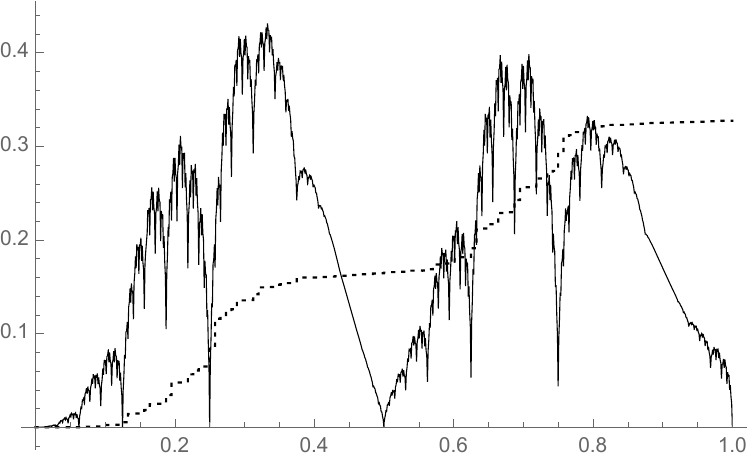}\\
\end{minipage}
\caption{Plots of the functions $x_{\bm f}$ when $f_n(t):=\cos2\pi t$ (left) and $f_n(t):=(\sin7t)^2$ (right) for all $n$. The dotted lines correspond to  $\<x_{\bm f}\>^7$.}
\end{center}\label{x fig}
\end{figure}

\begin{figure}
\begin{center}
\begin{minipage}[b]{8cm}
\includegraphics[width=8cm]{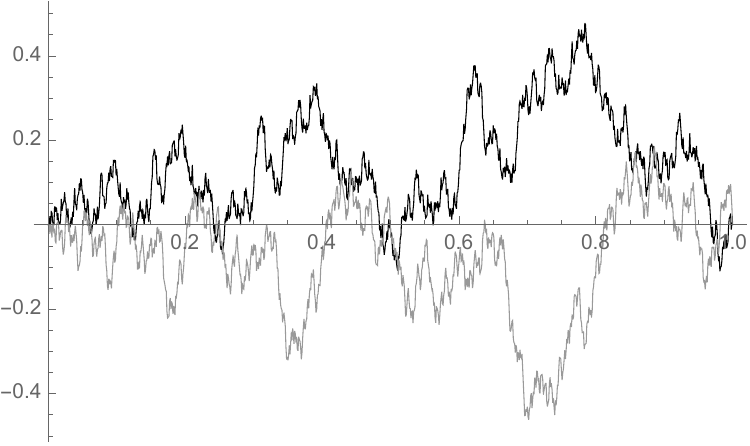}\\
\end{minipage}\qquad
\begin{minipage}[b]{8cm}
\includegraphics[width=8cm]
{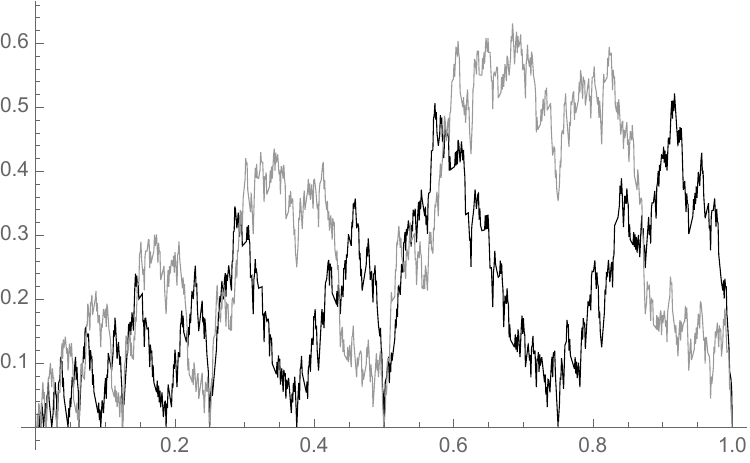}\\
\end{minipage}
\caption{Plots of the functions $y_\alpha^{\bm f}$ when $\alpha=e$ (grey), $\alpha=10e$ (black), $f_n(t):=\sin2\pi t$ (left), and {$f_n(t):=\frac{10t-n}{1+n}\cos\frac{6\pi nt}{1+n}$ (right).}}
\end{center}\label{y fig}
\end{figure}

\subsection{Constructing functions with local quadratic variation via pathwise It\=o differential equations}\label{IDE section}

In this section, $(\mathbb{T}_n)$ may be any refining sequence of partitions; we do not insist that it is given by the dyadic partitions in~\eqref{dyadic partition}.
Bick's and Willinger's approach~\citep{BickWillinger}  to the strictly pathwise hedging of options relies on the following class of trajectories,
\begin{equation}\label{Bick Willinger set}
\Big\{z\in C[0,1]\,\Big|\,\text{$z$ admits the quadratic variation  }\<z\>_t=\int_0^t\beta(s,z(s))\,ds\text{ for all $t$}\Big\}.
\end{equation}
Here,  $\beta$ is a certain strictly positive function, playing the role of a squared local volatility. We will therefore refer to~\eqref{Bick Willinger set} as a set of functions with local quadratic variation. See also~\citep{Lyons95,SchiedStadje,SchiedVoloshchenkoArbitrage} for results involving sets of the form~\eqref{Bick Willinger set}.

In the preceding sections and in~\citep{SchiedTakagi}, trajectories $x$ were constructed that have, e.g., the linear quadratic variation $\<x\>_t=t$. A first guess how to construct a function in the set~\eqref{Bick Willinger set} from a given $x$ with linear quadratic variation  could be to apply a time change as, e.g., in~\citep{EngelbertSchmidt}. This does indeed yield a function with the desired quadratic variation---but a quadratic variation that is taken with respect to a time-changed  sequence of partitions. It is not at all clear if the time-changed function $x$ will also admit a quadratic variation along the original refining sequence $(\mathbb{T}_n)$, and even if it does, it is not clear if it is as desired. Thus, a time change is not an appropriate means of {constructing} functions in~\eqref{Bick Willinger set}. Instead, our approach will be to set up and solve a corresponding It\=o differential equation, whose solution will then belong to the set in~\eqref{Bick Willinger set}.

The discussion of pathwise It\=o differential equations is also interesting in its own right. It is based on F\"ollmer's theory~\citep{FoellmerIto} of pathwise It\=o integration for integrators that admit a continuous quadratic variation. By F\"ollmer's pathwise It\=o formula, the integral $\int_0^t\eta(s)\,dx(s)$ exists as a limit of non-anticipative Riemann sums for all $\eta$ from the    class of admissible integrands, defined below. This pathwise integral  is sometimes called the \emph{F\"ollmer integral}. By $\text{\it CBV}[0,1]$ we will denote the class of continuous functions on $[0,1]$ that are of bounded variation. {The following definition is taken from~\citep[Definition 11]{SchiedCPPI}; see~\citep[Section 3]{SchiedCPPI} for further details.}

\begin{definition}\label{admissible integrand def}Let $x\in C[0,1]$ be a function with continuous quadratic variation along $(\mathbb{T}_n)$.
A function $t\mapsto \eta(t)$ is called an \emph{admissible integrand for $x$} if  there exist $n\in\mathbb{N}$,  a continuous function $\mathbf{A}:[0,1]\to\mathbb{R}^n$ whose components belong to $\text{\it CBV}[0,1]$, an open set  $U\subset \mathbb{R}^n\times\mathbb{R}$ with $(\mathbf{A}(t),x(t))\in U$  for all $t$, and
a continuously differentiable function $f:U\to\mathbb{R}$ for which, {for all $\bm a\in\{\bm b\in\mathbb{R}^n\,|\,\exists \xi\in\mathbb{R}\text{ s.th.~}(\mb b,\xi)\in U\}$,} the function  $ \xi\mapsto f(\mathbf{a},\xi)$ is twice continuously differentiable on its domain, such that $\eta(t)=\frac{\partial}{\partial \xi}f( \mathbf{A}(t),x(t))$.\end{definition}

{\begin{remark}Suppose that $x$ and $\mathbf{A}$ are as in Definition~\ref{admissible integrand def} and that  $g\in C^1(\mathbb{R}^n\times\mathbb{R})$. Then $\eta(t):=g(\mathbf{A}(t), x(t))$ is an admissible integrand for $x$. This follows by taking $f(\mathbf{a},\xi):=\int_0^\xi g(\mathbf{a},y)\,dy$ in Definition~\ref{admissible integrand def}. In particular, $\eta(t)=\exp\{\mu t+\sigma x(t)\}$, $\eta(t)=  x^n(t)$, $ \eta(t)=\exp\{  x^n(t)\}$,  $\eta(t)=\log(1+ x^{2n}(t))$, or smooth functions thereof are admissible integrands for $\mu,\sigma\in\mathbb{R}$ and $n\in\mathbb{N}$.  \end{remark}
}

We can now define the concept of a solution to a pathwise It\=o differential equation.

\begin{definition}\label{IDE def}Suppose that $x\in C[0,1]$ is a function with continuous quadratic variation along $(\mathbb{T}_n)$, $A$ belongs to $\text{\it CBV}[0,1]$, $\sigma:[0,1]\times\mathbb{R}\to\mathbb{R}$ and $b:[0,1]\times\mathbb{R}\to\mathbb{R}$ are continuous functions, and $z_0\in\mathbb{R}$. A function  $z\in C[0,1]$ is called a \emph{solution of the It\=o differential equation}
\begin{equation}\label{IDE def eq}
dz(t)=\sigma(t,z(t))\,dx(t)+b(t,z(t))\,dA(t)
\end{equation}
\emph{with initial condition $z(0)=z_0$} if  $t\mapsto \sigma(t,z(t))$ is an admissible integrand for $x$ and $z$ satisfies the integral form of~\eqref{IDE def eq},
\begin{equation*}
z(t)=z_0+\int_0^t\sigma(s,z(s))\,dx(s)+\int_0^tb(s,z(s))\,dA(s),\qquad 0\le t\le 1.
\end{equation*}
\end{definition}

The following result explains why solutions of~\eqref{IDE def eq} provide the desired functions in the class~\eqref{Bick Willinger set} of functions with local quadratic variation. It is an immediate consequence of~\citep[Proposition 12]{SchiedCPPI} and Lemma~\ref{zero qv lemma} below.

\begin{proposition}\label{ide sol qv prop}Suppose that $z$ is a solution of~\eqref{IDE def eq}. Then $z$ has the local quadratic variation
$$\<z\>_t=\int_0^t\sigma^2(s,z(s))\,d\<x\>_s
$$
along  $(\mathbb{T}_n)$.
\end{proposition}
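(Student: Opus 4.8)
The plan is to split $z$ into the F\"ollmer integral driven by $x$ and the finite-variation part driven by $A$, and to show that only the former contributes to the quadratic variation. Write
\[
z(t)=z_0+I(t)+J(t),\qquad I(t):=\int_0^t\sigma(s,z(s))\,dx(s),\quad J(t):=\int_0^tb(s,z(s))\,dA(s).
\]
Since $z$ is by hypothesis a solution of~\eqref{IDE def eq}, Definition~\ref{IDE def} guarantees that $t\mapsto\sigma(t,z(t))$ is an admissible integrand for $x$; hence $I$ is a genuine F\"ollmer integral, and~\citep[Proposition 12]{SchiedCPPI} applies and shows that $I$ admits the continuous quadratic variation $\<I\>_t=\int_0^t\sigma^2(s,z(s))\,d\<x\>_s$ along $(\mathbb{T}_n)$. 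Here the Lebesgue--Stieltjes integral is well defined because $\<x\>$ is continuous and nondecreasing while $s\mapsto\sigma^2(s,z(s))$ is continuous, hence bounded. On the other hand, $t\mapsto b(t,z(t))$ is continuous, hence bounded, so $J\in\text{\it CBV}[0,1]$, and Lemma~\ref{zero qv lemma} shows that $J$ has vanishing quadratic variation.

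It remains to handle the cross term and assemble the pieces. Adding the constant $z_0$ leaves all increments unchanged, so $\<z\>_t^n=\<I+J\>_t^n$, and the polarization identity~\eqref{polarization in approx} rewrites the right-hand side as $\<I\>_t^n+2\<I,J\>_t^n+\<J\>_t^n$. By the discrete Cauchy--Schwarz inequality,
\[
\bigl|\<I,J\>_t^n\bigr|\le\sqrt{\<I\>_t^n}\,\sqrt{\<J\>_t^n},
\]
and since $\<I\>_t^n$ converges (and is therefore bounded in $n$) while $\<J\>_t^n\to0$, we obtain $\<I,J\>_t^n\to0$. Consequently $\<z\>_t^n$ converges with limit $\<z\>_t=\<I\>_t=\int_0^t\sigma^2(s,z(s))\,d\<x\>_s$, and the continuity of $t\mapsto\<z\>_t$ is inherited from that of $\<x\>$ together with the boundedness of the integrand.

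There is no genuinely hard step here; the argument is a short bookkeeping exercise resting on the two cited results. The only point deserving a moment's care is the vanishing of the cross term $\<I,J\>$: for this one must know not merely that $I$ has a quadratic variation in the limiting sense but that the approximating sums $\<I\>_t^n$ themselves stay bounded---which is precisely what~\citep[Proposition 12]{SchiedCPPI} delivers---and that $\<J\>_t^n\to0$, whereupon the discrete Cauchy--Schwarz bound finishes the job. If Lemma~\ref{zero qv lemma} is phrased so as to cover directly the covariation of a $\text{\it CBV}$ function with a function having finite quadratic variation, this last paragraph shortens correspondingly.
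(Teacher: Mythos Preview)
Your proof is correct and follows essentially the same route as the paper: the paper states that the proposition is an immediate consequence of \citep[Proposition 12]{SchiedCPPI} and Lemma~\ref{zero qv lemma}, which are exactly the two ingredients you invoke. Your Cauchy--Schwarz paragraph for the cross term $\<I,J\>$ simply reproves the relevant implication of Lemma~\ref{zero qv lemma} (indeed, that lemma already gives $\<I+J\>_t=\<I\>_t$ directly once $\<J\>_1=0$), so---as you yourself anticipate in your final sentence---that part can be shortened to a single citation.
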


We now extend arguments from Doss~\citep{Doss} and Sussmann~\citep{Sussmann} to show existence and uniqueness of solutions to the It\=o differential equation~\eqref{IDE def eq} when $\sigma$ and $b$ satisfy certain regularity conditions. {As a matter of fact, the same method was used by Klingenh\"ofer and Z\"ahle~\citep{KlingenhoeferZaehle} to construct strictly pathwise solutions of~\eqref{IDE def eq}  when $x$ is H\"older continuous with exponent $\alpha>1/2$ and hence satisfies $\<x\>_t=0$. Our subsequent Theorem~\ref{general IDE existence thm} can thus also be regarded as an extension of~\citep{KlingenhoeferZaehle}  to the case of nonvanishing quadratic variation. In addition to the arguments used in~\citep{Doss,Sussmann,KlingenhoeferZaehle}, we will also need  the associativity theorem for the F\"ollmer integral as established in~\citep[Theorem 13]{SchiedCPPI} and several auxiliary results on nonlinear Stieltjes integral equations, which we have collected in Section~\ref{Stieltjes section}.} As in~\citep{Doss}, the basic idea is to consider the flow $\phi(\tau,\xi ,t)$ associated with $\sigma(\tau,\cdot)$ for fixed $\tau\in[0,1]$, assuming that this flow exists for all $\xi ,t\in\mathbb{R}$, and $\tau\in[0,1]$. That is, $\phi(\tau,\xi ,t)=u(t)$ if $u$ solves the ordinary differential equation $\dot u(t)=\sigma(\tau,u(t))$ with initial condition $u(0)=\xi $. In particular,
\begin{equation}\label{flow t-differential eq}
\phi(\tau,\xi ,0)=\xi \qquad\text{and}\qquad
\phi_t(\tau,\xi ,t)=\sigma(\tau,\phi(\tau,\xi ,t)).
\end{equation}
{Here and and in the sequel, $\phi_t(\tau,\xi,t):=\partial \phi(\tau,\xi,t)/\partial t$, and the partial derivatives $\phi_\tau$, $\phi_{\xi}$, $\phi_{tt}$ etc.~are defined analogously.} We now assume without loss of generality that $x(0)=0$ and define $z$ as
\begin{equation*}
z(t):=\phi(t,B(t),x(t)),
\end{equation*}
where $B\in  CBV[0,1]$ will be determined later. Applying F\"ollmer's pathwise It\=o's formula{, e.g., in the form of~\citep[Theorem 9]{SchiedCPPI},} and using~\eqref{flow t-differential eq} yields
\begin{align*}
z(t)&{=\phi(t,B(t),x(t))}\\
&{=\phi(0,B(0),x(0))+\int_0^t\phi_t(s,B(s),x(s))\,dx(s)+\int_0^t\phi_\tau(s,B(s),x(s))\,ds}\\
&{\qquad+\int_0^t\phi_\xi(s,B(s),x(s))\,dB(s)+\frac12\int_0^t\phi_{tt}(s,B(s),x(s))\,d\<x\>_s}\\
&=B(0)+\int_0^t\sigma(s,z(s))\,dx(s)+\int_0^t\phi_\tau(s,B(s),x(s))\,ds\\&\qquad+\int_0^t\phi_\xi (s,B(s),x(s))\,dB(s)+\frac12\int_0^t\phi_{tt}(s,B(s),x(s))\,d\<x\>_s,
\end{align*}
provided that the function $\phi$ is sufficiently smooth.
So $z$ will solve~\eqref{IDE def eq} if $B$ satisfies the initial condition $B(0)=z_0$ and the sum of three rightmost integrals  agrees with $\int_0^tb(s,z(s))\,dA(s)$.  Both conditions will be satisfied if $B$  solves the following Stieltjes integral equation:
\begin{equation}\label{Doss-Sussman Stieltjes int eq}
\begin{split}
B(t)&=z_0+\int_0^t\frac{b(s,\phi(s,B(s),x(s)))}{\phi_\xi (s,B(s),x(s))}\,dA(s)-\int_0^t\frac{\phi_\tau(s,B(s),x(s))}{\phi_\xi (s,B(s),x(s))}\,ds\\
&\qquad-\frac12\int_0^t\frac{\phi_{tt}(s,B(s),x(s))}{\phi_\xi (s,B(s),x(s))}\,d\<x\>_s.
\end{split}
\end{equation}

{For the sake of precise statements, let us now introduce the following standard terminology. Let $f$ be a real-valued function on $[0,1]\times\mathbb{R}$. We will say that $f$ satisfies a \emph{local Lipschitz condition} if for all $p>0$ there is $L_p\ge0$ such that
\begin{equation}\label{b Lipschitz condition}
 |f(t,\xi)-f(t,\zeta)|\le L_p|\xi-\zeta| \quad \text{ for $\xi,\zeta\in[-p,p]$ and $t\in[0,1]$.}
\end{equation}
Moreover, we will say that $f$ satisfies a \emph{linear growth condition} if
\begin{equation}\label{linear growth condition}
|f(t,\xi)|\le c(1+|\xi|)\qquad\text{for some constant $c\ge0$ and all $t\in[0,1]$, $\xi\in\mathbb{R}$.}
\end{equation}}

\begin{theorem}\label{general IDE existence thm} Suppose that $x\in C[0,1]$  satisfies $x(0)=0$ and admits the continuous quadratic variation $\<x\>$ along $(\mathbb{T}_n)$, $A\in \text{\it CBV}[0,1]$, and $b\in C([0,1]\times\mathbb{R})$ satisfies  both  a local Lipschitz condition~\eqref{b Lipschitz condition} and a linear growth condition~\eqref{linear growth condition}. 
Suppose moreover that there exists some open interval  $I\supset[0,1]$ such that $\sigma(t,\xi)$ is defined for $(t,\xi)\in I\times\mathbb{R}$, belongs to $ C^2(I\times \mathbb{R})$, and has bounded first derivatives in $t$ and $\xi$. Then the flow $\phi(\tau,t,\xi )$ defined in~\eqref{flow t-differential eq} is well-defined for all $\tau\in I$ and $\xi ,t\in\mathbb{R}$, $\phi$ and $\phi_t$ are twice continuously differentiable, the Stieltjes integral equation~\eqref{Doss-Sussman Stieltjes int eq}
 admits a unique solution $B$  for every $z_0\in \mathbb{R}$,  and $z(t):=\phi(t,B(t),x(t))$ is the unique solution of the It\=o differential equation
\begin{equation}\label{general IDE existence thm eq in thm}
dz(t)=\sigma(t,z(t))\,dx(t)+b(t,z(t))\,dA(t)
\end{equation}
with initial condition $z(0)=z_0$.
\end{theorem}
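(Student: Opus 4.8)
The plan is to follow the Doss--Sussman scheme sketched before the theorem, in four steps. \emph{Step 1 (the flow).} Since $\sigma(\tau,\cdot)$ has bounded $\xi$-derivative, it is globally Lipschitz in $\xi$ with a constant locally bounded in $\tau$ and satisfies $|\sigma(\tau,\xi)|\le|\sigma(\tau,0)|+M|\xi|$; hence the ordinary differential equation $\dot u=\sigma(\tau,u)$, $u(0)=\xi$, has a unique solution $u(t)=\phi(\tau,\xi,t)$ defined for all $t\in\mathbb{R}$, $\tau\in I$ and $\xi\in\mathbb{R}$. As $\sigma\in C^2(I\times\mathbb{R})$, the classical theory of differentiable dependence of ODE solutions on initial conditions and parameters gives $\phi\in C^2$, and then $\phi_t=\sigma(\tau,\phi)$ is $C^2$ as a composition of $C^2$ maps; this is the regularity asserted in the theorem. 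Differentiating~\eqref{flow t-differential eq} in $\xi$ gives $\phi_\xi(\tau,\xi,t)=\exp\!\big(\int_0^t\sigma_\xi(\tau,\phi(\tau,\xi,s))\,ds\big)$, which is bounded and bounded away from zero uniformly over $\tau\in[0,1]$ and $t$ in the (bounded) range of $x$, so the right-hand side of~\eqref{Doss-Sussman Stieltjes int eq} is well defined. Likewise $\phi_\tau$ solves a linear variational equation with bounded inhomogeneity and hence is bounded along the relevant ranges, while $\phi_{tt}=\sigma_\xi(\tau,\phi)\,\sigma(\tau,\phi)$ and $\phi$ grow there at most linearly in the middle variable. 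Combined with the local Lipschitz and linear growth hypotheses on $b$ and with $\phi_\xi$ bounded away from zero, this shows that each of the three integrand maps in~\eqref{Doss-Sussman Stieltjes int eq}, seen as a function of $B$, is locally Lipschitz and of linear growth, uniformly in $s\in[0,1]$.

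\emph{Steps 2 and 3 (Stieltjes equation and existence).} Combining the Jordan decomposition of $A$, the identity, and $\<x\>$ into a single continuous nondecreasing function $V$, equation~\eqref{Doss-Sussman Stieltjes int eq} takes the form $B(t)=z_0+\int_0^t\Phi(s,B(s))\,dV(s)$ for a Carath\'eodory integrand $\Phi$ which, by Step 1, is locally Lipschitz and of linear growth in its second argument, uniformly in $s$. The auxiliary results on nonlinear Stieltjes integral equations collected in Section~\ref{Stieltjes section} then yield a unique solution $B$, with $B\in CBV[0,1]$ because $\Phi(s,B(s))$ stays bounded and $V$ is continuous of bounded variation. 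For this $B$, put $z(t):=\phi(t,B(t),x(t))$; the computation displayed before the theorem then applies once its ingredients are verified. F\"ollmer's pathwise It\=o formula, e.g.\ as in~\citep[Theorem 9]{SchiedCPPI}, applies to $\phi(\cdot,B(\cdot),x(\cdot))$ because $\phi\in C^2$, $B\in CBV[0,1]$ and $x$ has continuous quadratic variation along $(\mathbb{T}_n)$; and $\sigma(\cdot,z(\cdot))=\phi_t(\cdot,B(\cdot),x(\cdot))=g\big((\mathrm{id},B)(\cdot),x(\cdot)\big)$ with $g(\tau,\beta,\xi):=\sigma(\tau,\phi(\tau,\beta,\xi))\in C^1$ is an admissible integrand for $x$ by the Remark following Definition~\ref{admissible integrand def}. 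Finally, since $B$ solves~\eqref{Doss-Sussman Stieltjes int eq}, its Stieltjes differential $dB$ can be read off the right-hand side there; multiplying by $\phi_\xi(s,B(s),x(s))$ and rearranging the iterated integrals, using where needed the associativity theorem for the F\"ollmer integral~\citep[Theorem 13]{SchiedCPPI}, collapses the three additional integrals in the expansion of $z$ into $\int_0^tb(s,z(s))\,dA(s)$. As $z(0)=\phi(0,z_0,0)=z_0$, this shows $z$ solves~\eqref{general IDE existence thm eq in thm}.

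\emph{Step 4 (uniqueness).} Since $\phi_\xi>0$, the map $(\tau,\xi,t)\mapsto(\tau,\phi(\tau,\xi,t),t)$ is a global $C^2$ diffeomorphism of $I\times\mathbb{R}\times\mathbb{R}$ (it is bijective with nonvanishing Jacobian $\phi_\xi$); let $\psi\in C^2$ be the middle component of its inverse, so that $\phi(\tau,\psi(\tau,\zeta,t),t)=\zeta$. If $\tilde z$ is any solution of~\eqref{general IDE existence thm eq in thm} with $\tilde z(0)=z_0$, set $\tilde B(t):=\psi(t,\tilde z(t),x(t))$. By Proposition~\ref{ide sol qv prop}, Lemma~\ref{zero qv lemma} and the usual covariation property of the F\"ollmer integral, $\tilde z$ admits, along $(\mathbb{T}_n)$, the quadratic variation $\<\tilde z\>_t=\int_0^t\sigma^2(s,\tilde z(s))\,d\<x\>_s$ and the covariation $\<\tilde z,x\>_t=\int_0^t\sigma(s,\tilde z(s))\,d\<x\>_s$, so that the multidimensional pathwise It\=o formula applies to $\psi(\cdot,\tilde z(\cdot),x(\cdot))$. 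Differentiating $\phi(\tau,\psi(\tau,\zeta,t),t)=\zeta$ in $\zeta$, in $t$ and in $\tau$ yields $\psi_\zeta\phi_\xi=1$, $\psi_t=-\sigma(\tau,\zeta)\psi_\zeta$ and $\psi_\tau\phi_\xi=-\phi_\tau$, and one further differentiation yields the corresponding second-order identities; inserting $d\tilde z=\sigma(s,\tilde z)\,dx+b(s,\tilde z)\,dA$ and the above formulae for $\<\tilde z\>$ and $\<\tilde z,x\>$ into the It\=o expansion of $\tilde B$, the $dx$-integral cancels identically and the second-order terms combine into $-\frac12(\phi_{tt}/\phi_\xi)\,d\<x\>$. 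Hence $\tilde B\in CBV[0,1]$ and $\tilde B$ satisfies~\eqref{Doss-Sussman Stieltjes int eq} with initial value $z_0$; by the uniqueness from Steps 2 and 3, $\tilde B=B$, so $\tilde z=\phi(\cdot,\tilde B(\cdot),x(\cdot))=z$.

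Steps 1--3 are essentially the classical Doss--Sussman construction transported to the F\"ollmer integral, with the solvability in Step 2 delegated to Section~\ref{Stieltjes section}. The genuinely delicate point is Step 4: one must first extract $\<\tilde z\>$ and the covariation $\<\tilde z,x\>$ of an arbitrary solution, along the \emph{given} refining sequence, directly from its integral equation, and then carry out the second-order chain-rule bookkeeping for the inverse flow $\psi$ carefully enough to see that all contributions involving $dx$, $d\<\tilde z\>$ and $d\<\tilde z,x\>$ cancel and reproduce~\eqref{Doss-Sussman Stieltjes int eq} exactly.
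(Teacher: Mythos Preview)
Your proposal is correct and follows essentially the same Doss--Sussman scheme as the paper. The only cosmetic difference is in Step~4: you introduce the inverse map $\psi$ defined by $\phi(\tau,\psi(\tau,\zeta,t),t)=\zeta$, whereas the paper writes $\tilde B(t)=\phi(t,\tilde z(t),-x(t))$ and packages the needed first- and second-order identities as parts~\ref{flow lemma8} and~\ref{flow lemma9} of the flow lemma; by the semigroup property $\phi(\tau,\phi(\tau,\xi,s),t)=\phi(\tau,\xi,s+t)$ one has $\psi(\tau,\zeta,t)=\phi(\tau,\zeta,-t)$, so these are the same object and the chain-rule bookkeeping is identical.
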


It follows from the preceding theorem and Proposition~\ref{ide sol qv prop} that the solution of~\eqref{general IDE existence thm eq in thm}
 belongs to the set
 \begin{equation*}
 \Sigma:=\Big\{z\in C[0,1]\,\Big|\,\text{$z$ admits the quadratic variation  }\<z\>_t=\int_0^t\sigma^2(s,z(s))\,{d\<x\>_s}\Big\}.
 \end{equation*}
The following result  implies in particular that {in the case of linear quadratic variation, $\<x\>_t=t$,} the  set $\Sigma$
is dense in $C[0,1]$ and that its members can connect any two points within any given time interval. In this sense, the result can be regarded as a deterministic analogue of a support theorem for diffusion processes as in~\citep{StroockVaradhanSupport}. {The assumption  $\<x\>_t=t$ is not essential and can easily be relaxed. We impose it here because because it is the relevant case for the applications mentioned at the beginning of this section and because it allows us to base the proof of Corollary~\ref{support corollary} on standard results from the theory of ordinary differential equations.  It is also not difficult to prove variants of Corollary~\ref{support corollary} for functions $x$ with general quadratic variation and other drift terms in~\eqref{z dense IDE 1} and~\eqref{z dense IDE}.}

\begin{corollary}\label{support corollary}Suppose that $\sigma$ satisfies the conditions of Theorem~\ref{general IDE existence thm}. Let moreover   $x\in C[0,1]$  be a fixed function that satisfies $x(0)=0$ and admits the linear quadratic variation $\<x\>_t=t$ along $(\mathbb{T}_n)$. Then the following two assertions hold.
\begin{enumerate}[\rm(a)]
\item Let $z_0,z_1\in\mathbb{R}$ and $t_0\in(0,1]$ be given. Then there exists $b\in\mathbb{R}$ such that the solution $z$ of the It\=o differential equation
\begin{equation}\label{z dense IDE 1}
dz(t)=\sigma(t,z(t))\,dx(t)+b\,dt,\qquad z(0)=z_0,
\end{equation}
satisfies $z(t_0)=z_1$.
\item The set of solutions to the It\=o differential equations
\begin{equation}\label{z dense IDE}
dz(t)=\sigma(t,z(t))\,dx(t)+b(t)\,dt
\end{equation}
where $b(\cdot)$ ranges over $C[0,1]$ and $x$ is fixed is dense in $C[0,1]$.
\end{enumerate}
{In particular, the  set~\eqref{Bick Willinger set}
is dense in $C[0,1]$ and  its members can connect any two points within any given nondegenerate time interval.}
\end{corollary}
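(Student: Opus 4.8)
The plan is to leverage the Doss--Sussman representation $z(t)=\phi(t,B(t),x(t))$ supplied by Theorem~\ref{general IDE existence thm}. Since $\<x\>_t=t$ and the drift is taken against $A(t)=t$, the Stieltjes equation~\eqref{Doss-Sussman Stieltjes int eq} degenerates into the genuine ordinary differential equation
\begin{equation}\label{B ODE plan}
\dot B(t)=\frac{b(t)-\phi_\tau(t,B(t),x(t))-\tfrac12\phi_{tt}(t,B(t),x(t))}{\phi_\xi(t,B(t),x(t))}=:b(t)\,h(t,B(t))+g(t,B(t)),\qquad B(0)=z_0,
\end{equation}
where $h:=1/\phi_\xi(\cdot,\cdot,x(\cdot))$ and $g:=-(\phi_\tau+\tfrac12\phi_{tt})/\phi_\xi$, both evaluated along $x$. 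A first step, used everywhere below, is to read off a priori bounds from the variational equations of the flow: differentiating~\eqref{flow t-differential eq} in $\xi$ gives $\phi_\xi(\tau,\xi,t)=\exp\big(\int_0^t\sigma_\xi(\tau,\phi(\tau,\xi,s))\,ds\big)$, so that $h$ takes values in a fixed compact subinterval $[c_0,c_1]\subset(0,\infty)$ depending only on $\|x\|_\infty$ and $\sup|\sigma_\xi|$; differentiating also in $\tau$ and in $t$ and invoking the boundedness of $\sigma_\tau,\sigma_\xi$ together with the linear growth of $\sigma(\tau,\cdot)$ yields $|g(t,\xi)|\le K_1+K_2|\xi|$ uniformly in $t\in[0,1]$. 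In particular $\phi_\xi>0$ and each map $\phi(\tau,\cdot,t)$ is a $C^2$-diffeomorphism of $\mathbb{R}$ onto itself; I write $\psi(\tau,\cdot,t)$ for its inverse, which is jointly continuous.

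For part~(a), a constant $b$ is an admissible drift in Theorem~\ref{general IDE existence thm} (with $A(t)=t$), so it suffices to prove that $b\mapsto z(t_0;b)=\phi\big(t_0,B(t_0;b),x(t_0)\big)$ is onto $\mathbb{R}$. Because $\phi(t_0,\cdot,x(t_0))$ is a bijection, this reduces to showing that $b\mapsto B(t_0;b)$ is a continuous, strictly increasing surjection of $\mathbb{R}$, after which the intermediate value theorem finishes the argument. Continuity is standard continuous dependence of ODE solutions on a parameter. Strict monotonicity follows from a differential-inequality argument: if $b_1<b_2$ and $D:=B(\cdot;b_2)-B(\cdot;b_1)$, then $D(0)=0$ and at any zero $t_\ast$ of $D$ one has $\dot D(t_\ast)=(b_2-b_1)h(t_\ast,B(t_\ast;b_1))>0$, which prevents $D$ from returning to $0$ and forces $D>0$ on $(0,1]$. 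For surjectivity, comparison with the explicitly solvable equation $\dot Y=bc_0-K_1-K_2|Y|$, $Y(0)=z_0$, shows that for $b\ge0$ one has $B(t_0;b)\ge Y(t_0;b)$; since the equilibrium $(bc_0-K_1)/K_2$ of the latter diverges as $b\to+\infty$ while the transient needed to reach its neighbourhood stays uniformly short, $B(t_0;b)\to+\infty$. The symmetric estimate gives $B(t_0;b)\to-\infty$ as $b\to-\infty$.

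For part~(b), I would instead run the representation backwards. Given a target $w\in C[0,1]$, set $B^\ast(t):=\psi(t,w(t),x(t))$; this is a continuous function, which I approximate uniformly by some $\tilde B\in C^1[0,1]$ (possible since $C^1[0,1]$ is dense in $C[0,1]$). Putting $\tilde z(t):=\phi(t,\tilde B(t),x(t))$, the uniform bound $\phi_\xi\le c_1$ over the relevant compact set gives $\|\tilde z-w\|_\infty\le c_1\,\|\tilde B-B^\ast\|_\infty$, so $\tilde z$ approximates $w$ arbitrarily well. It remains to recognise $\tilde z$ as a solution in~\eqref{z dense IDE}: applying F\"ollmer's pathwise It\=o formula to $\phi(s,\tilde B(s),x(s))$ exactly as in the derivation of~\eqref{Doss-Sussman Stieltjes int eq}, and using $\tilde B\in C^1$ and $\<x\>_s=s$, one obtains
\begin{equation*}
\tilde z(t)=\tilde B(0)+\int_0^t\sigma(s,\tilde z(s))\,dx(s)+\int_0^t\Big(\phi_\tau+\phi_\xi\,\tilde B'+\tfrac12\phi_{tt}\Big)\big(s,\tilde B(s),x(s)\big)\,ds,
\end{equation*}
where $\sigma(\cdot,\tilde z(\cdot))$ is an admissible integrand for $x$, as in that derivation. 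Hence $\tilde z$ solves~\eqref{z dense IDE} with the continuous drift $b(s):=(\phi_\tau+\phi_\xi\tilde B'+\tfrac12\phi_{tt})(s,\tilde B(s),x(s))$ and initial value $\tilde B(0)$, and by uniqueness it is \emph{the} corresponding solution; this proves density. The final ``in particular'' assertion is then immediate: taking $\beta=\sigma^2$, Proposition~\ref{ide sol qv prop} places every solution of~\eqref{z dense IDE} and~\eqref{z dense IDE 1} into the set~\eqref{Bick Willinger set}, so part~(b) yields its density and part~(a), applied on an arbitrary nondegenerate subinterval after the obvious time shift, yields the connectivity of prescribed endpoints.

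The step I expect to be genuinely delicate is the surjectivity in part~(a): the reduced equation~\eqref{B ODE plan} grows only \emph{linearly} in $B$, so a crude estimate does not force $B(t_0;b)$ to escape to infinity, and it is precisely the comparison with $\dot Y=bc_0-K_1-K_2|Y|$ — whose equilibrium diverges while the time to approach it does not — that makes the argument go through. This is also where the uniform positivity $h\ge c_0>0$, itself a consequence of the boundedness of $\sigma_\xi$ and of $x$ on $[0,1]$, is indispensable.
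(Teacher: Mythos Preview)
Your proof is correct and follows essentially the same strategy as the paper's: for (a) you reduce via the Doss--Sussman representation to showing $b\mapsto B(t_0;b)$ hits every real value, establishing this through continuous dependence plus a comparison argument (the paper compares with the linear ODEs $y'=bc_g^\pm+c_f^\pm y$ rather than your $\dot Y=bc_0-K_1-K_2|Y|$, and skips your monotonicity step, but the mechanism is identical); for (b) you run the representation backwards by picking a $C^1$ function $\tilde B$ close to $\psi(\cdot,w(\cdot),x(\cdot))$ and reading off the drift from It\=o's formula, which is exactly the paper's argument made more explicit. Your identification of the surjectivity step as the delicate point, hinging on the uniform positivity of $h=1/\phi_\xi$, is spot on.
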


 Let us now give three concrete examples for Theorem~\ref{general IDE existence thm}. We start with two elementary   cases of    linear It\=o differential equations. The general linear It\=o differential equation can be solved in a similar manner. Our third example is a nonlinear It\=o differential  equation. 

{ \begin{example}   We take $A(t)=t$ and $x\in C[0,1]$  satisfying  $x(0)=0$ and  $\<x\>_t=t$ along $(\mathbb{T}_n)$.  \begin{enumerate}
 \item {\bf (Langevin equation).} We consider the It\=o differential equation
 \begin{equation}\label{langevin eq}
 dz(t)=\sigma\,dx(t)+b z(t)\,dt,\qquad z(0)=z_0,
 \end{equation}
  where $\sigma$ and $b$ are real constants. The corresponding flow $\phi(\tau,\xi,t)$ is independent of $\tau$ and has the form $\phi(\xi,t)=\xi+\sigma t$. The equation~\eqref{Doss-Sussman Stieltjes int eq} is a linear ordinary differential equation (ODE) with   unique solution
 $$B(t)=z_0e^{b t}+\sigma b \int_0^te^{b (t-s)}x(s)\,ds.$$
 Therefore, the unique solution of~\eqref{langevin eq}
 has the form
 $$z(t)=z_0e^{bt}+\sigma b\int_0^te^{b(t-s)}x(s)\,ds+\sigma x(t).$$
\item {(\bf Time-inhomogeneous Black--Scholes dynamics).} Consider the following linear It\=o differential equation
\begin{equation}\label{BS eq}
dz(t)=\sigma(t)z(t)\,dx(t)+b(t)z(t)\,dt,\qquad z(0)=z0,
\end{equation}
where $b\in C[0,1]$ and $\sigma\in C^2(I)$ for some open interval $I\supset[0,1]$.
The corresponding flow  has the  form $\phi(\tau,\xi,t)=\xi e^{\sigma(\tau) t}$. The equation~\eqref{Doss-Sussman Stieltjes int eq}
 becomes equivalent to the ODE $B'(t)=(b(t)-\sigma'(t)x(t)-\frac12\sigma^2(t))B(t)$ with initial condition $B(0)=z_0$, whence
 $$B(t)=z_0\exp\bigg(\int_0^t\Big(b(s)-\sigma'(s)x(s)-\frac{1}{2}\sigma^2(s)\Big)\, ds\bigg).$$
Therefore, the unique solution of~\eqref{BS eq} is
  \begin{equation*}\begin{gathered}z(t)=\phi(t,B(t), x(t))=B(t)e^{\sigma(t)x(t)}\\=
  z_0\exp\bigg(\sigma(t)x(t)+\int_0^t\Big(b(s)-\sigma'(s)x(s)-\frac{1}{2}\sigma^2(s)\Big)\,ds\bigg).\end{gathered}\end{equation*}
When using the fact that
$$\sigma(t)x(t)=\int_0^t\sigma(s)\,dx(s)+\int_0^t\sigma'(s)x(s)\,ds,
$$
which follows from the assumption $x(0)=0$ and F\"{o}llmer's pathwise It\=o formula applied to the function $f(t,x(t))=\sigma(t)x(t)$, we arrive at the more common representation
 $$z(t)=  z_0\exp\bigg(\int_0^t\sigma(s)\,dx(s)+\int_0^t\Big(b(s)-\frac{1}{2}\sigma^2(s)\Big)\,ds\bigg).$$
   \item {\bf(A square-root equation)} The function $\sigma(\xi):=\sqrt{1+\xi^2}$ clearly satisfies the conditions of Theorem~\ref{general IDE existence thm}. The corresponding flow is given by
$\phi(t,\xi)=\sinh\big(t+\sinh^{-1}(\xi)\big)$.
For given drift term $b$, the equation~\eqref{Doss-Sussman Stieltjes int eq} implies the following ordinary differential equation,
\begin{equation}\label{sqrt diffusion B eq}
B'(t)=\frac{\sqrt{1+B(t)^2}}{\cosh\big(x(t)+\sinh^{-1}(B(t))\big)}\Big(b(t,\phi(B(t),x(t))-\frac12\phi(B(t),x(t))\Big).
\end{equation}
Its right-hand side vanishes for $b(t,\xi)=\frac12\xi$, and so $z(t):=\sinh(x(t)+\sinh^{-1}z_0)$ solves
$$dz(t)=\sqrt{1+z(t)^2}\,dx(t)+\frac12 z(t)\,dt,\qquad z(0)=z_0.
$$
Solutions for other choices of $b$ can be obtained by solving~\eqref{sqrt diffusion B eq} numerically.
\end{enumerate}\end{example}
}

\section{Proofs}\label{Proofs section}

\subsection{{Two auxiliary results and the proof of Proposition~\ref{Stolz prop}}}

For the proof of Proposition~\ref{Stolz prop}, we will need two auxiliary lemmas. The first is a simple converse to the Stolz--Cesaro theorem~\citep[Theorem 1.22]{Muresan}.

\begin{lemma}\label{Stolz-Cesaro converse lemma}
Let $(a_n)$ and $(b_n)$ be real sequences such that $b_n>0$, $b_{n+1}/b_n\to \beta\neq1$, and $a_n/b_n\to \ell$. Then also
$$\frac{a_{n+1}-a_n}{b_{n+1}-b_n}\longrightarrow \ell.
$$
\end{lemma}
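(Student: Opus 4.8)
The plan is to reduce the assertion to a one-line limit computation after an algebraic rearrangement. First I would introduce the abbreviations $c_n := a_n/b_n$ and $r_n := b_{n+1}/b_n$; by hypothesis $c_n \to \ell$ and $r_n \to \beta$. Since $b_n > 0$ and $\beta \neq 1$, we have $r_n \neq 1$ for all $n$ large enough (indeed $|r_n-1|>|\beta-1|/2>0$ eventually), hence $b_{n+1} \neq b_n$ eventually, so the quotient $\frac{a_{n+1}-a_n}{b_{n+1}-b_n}$ is well-defined for all large $n$ and it suffices to study its behaviour along the tail. This is the only place where the hypothesis $\beta\neq 1$ enters.

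The key step is the identity
$$\frac{a_{n+1}-a_n}{b_{n+1}-b_n} = \frac{b_{n+1}c_{n+1} - b_n c_n}{b_{n+1}-b_n} = \frac{r_n\, c_{n+1} - c_n}{r_n - 1},$$
where the last equality comes from dividing numerator and denominator of the middle expression by $b_n > 0$. Passing to the limit, the numerator tends to $\beta\ell - \ell$ and the denominator to $\beta - 1 \neq 0$, so the algebra of limits yields
$$\lim_{n\to\infty}\frac{a_{n+1}-a_n}{b_{n+1}-b_n} = \frac{(\beta-1)\ell}{\beta - 1} = \ell,$$
which is the claim.

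I do not expect any genuine obstacle here: the whole content of the lemma is the observation that $a_n = b_n c_n$ with $c_n$ convergent and $b_{n+1}/b_n$ convergent to a limit $\beta \neq 1$, the inequality $\beta\neq1$ being exactly what keeps the denominator $r_n - 1$ bounded away from $0$. The only care needed is the well-definedness of the quotient for large $n$, handled as above; everything else is bookkeeping with convergent sequences.
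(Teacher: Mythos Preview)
Your proof is correct and is essentially identical to the paper's: the paper divides numerator and denominator by $b_n$ to obtain the same identity $\frac{a_{n+1}-a_n}{b_{n+1}-b_n}=\frac{r_n c_{n+1}-c_n}{r_n-1}$ (written without the abbreviations $c_n,r_n$) and then passes to the limit. You even add the small remark on eventual well-definedness of the quotient, which the paper leaves implicit.
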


\begin{proof}We may write
\begin{align*}
\frac{a_{n+1}-a_n}{b_{n+1}-b_n}&=\frac1{\frac{b_{n+1}}{b_n}-1}
\Big(\frac{a_{n+1}}{b_{n+1}}\cdot\frac{b_{n+1}}{b_n}-\frac{a_n}{b_n}\Big).
\end{align*}
Sending $n$ to infinity and using our assumptions thus gives the result.
\end{proof}

The following lemma  can easily be deduced from Propositions 2.2.2, 2.2.9, and 2.3.2 in~\citep{Sondermann}.

\begin{lemma}\label{zero qv lemma} Let $f\in C[0,1]$ be such that $\<f\>_1=0$. Then, for   $x\in C[0,1]$ and $t\in[0,1]$, the quadratic variation $\<x\>_t$ exists  if and only if $\<x+f\>_t$ exists. In this case, we have $\<x\>_t=\<x+f\>_t$  and $\<x,f\>_t=0$.
\end{lemma}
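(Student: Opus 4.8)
The plan is to prove Lemma~\ref{zero qv lemma} directly from the polarization identity together with a Cauchy--Schwarz estimate on the discrete covariation sums. Write $\<x,f\>_t^n$ for the discrete covariation along $\mathbb{T}_n$ up to $t$. By Cauchy--Schwarz applied to the increments,
\[
\big|\<x,f\>_t^n\big|\le\sqrt{\<x\>_t^n}\cdot\sqrt{\<f\>_t^n}.
\]
Since $\<f\>_1=0$ and $\<f\>_t^n\le\<f\>_1^n$, the right-hand factor $\sqrt{\<f\>_t^n}$ tends to $0$; as long as $\<x\>_t^n$ stays bounded, this forces $\<x,f\>_t^n\to0$. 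The point is therefore to control boundedness of $\<x\>_t^n$, which I will get from whichever of $\<x\>_t$, $\<x+f\>_t$ is assumed to exist: if $\<x\>_t$ exists then $\<x\>_t^n$ converges and is in particular bounded; symmetrically, if $\<x+f\>_t$ exists then $\<x+f\>_t^n$ is bounded, and since $\<x\>_t^n = \<x+f\>_t^n - 2\<(x+f),f\>_t^n + \<f\>_t^n$, another application of Cauchy--Schwarz (now with $x+f$ in place of $x$) shows the cross term is $O(\sqrt{\<x+f\>_t^n}\cdot\sqrt{\<f\>_t^n})$, hence $\<x\>_t^n$ is bounded as well.

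Once $\<x,f\>_t^n\to0$ is established in both scenarios, the equivalence is immediate from the polarization identity~\eqref{polarization in approx} in the form
\[
\<x+f\>_t^n=\<x\>_t^n+2\<x,f\>_t^n+\<f\>_t^n.
\]
If $\<x\>_t$ exists, the right-hand side converges to $\<x\>_t+0+0$, so $\<x+f\>_t$ exists and equals $\<x\>_t$. Conversely, if $\<x+f\>_t$ exists, rewrite this as $\<x\>_t^n=\<x+f\>_t^n-2\<x,f\>_t^n-\<f\>_t^n$; here I should be slightly careful and re-run the Cauchy--Schwarz bound for $\<x,f\>_t^n$ using the already-established boundedness of $\<x\>_t^n$ (from the previous paragraph) to conclude $\<x,f\>_t^n\to0$, whence the right-hand side converges to $\<x+f\>_t$, giving $\<x\>_t=\<x+f\>_t$. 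Either way $\<x,f\>_t=\lim_n\<x,f\>_t^n=0$.

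Alternatively — and this is probably the cleanest writeup — one simply invokes the three cited results from~\citep{Sondermann}: Proposition 2.2.2 (or its analogue) giving the Cauchy--Schwarz inequality for discrete (co)variation sums, Proposition 2.2.9 giving that a function with $\<f\>_1=0$ has vanishing covariation with any function possessing a quadratic variation, and Proposition 2.3.2 giving the polarization/bilinearity bookkeeping. Combining these exactly reproduces the statement. I would present the short self-contained argument above and add the remark that it is also a direct consequence of those three propositions.

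The main obstacle, such as it is, is purely the boundedness bookkeeping in the converse direction: a priori, from the existence of $\<x+f\>_t$ alone one does not immediately know $\<x\>_t^n$ is bounded, and the Cauchy--Schwarz control of the cross term $\<x,f\>_t^n$ requires exactly this. Resolving it needs the one extra step of expressing $\<x\>_t^n$ via $x+f$ and $f$ and bounding the resulting cross term by $C\sqrt{\<f\>_t^n}\to0$. There are no analytic subtleties beyond that; no uniformity in $t$ is needed since $t$ is fixed, and the $t$-pointwise nature of the hypotheses is all that is used.
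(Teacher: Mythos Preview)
Your proof is correct and coincides with the paper's approach: the paper simply says the lemma ``can easily be deduced from Propositions 2.2.2, 2.2.9, and 2.3.2 in~\citep{Sondermann},'' and your self-contained Cauchy--Schwarz plus polarization argument is exactly the content of those propositions (as you yourself note). The slightly roundabout converse step---first bounding $\<x\>_t^n$, then bounding $\<x,f\>_t^n$---can be streamlined by directly concluding $\<x+f,f\>_t^n\to0$ from Cauchy--Schwarz and reading off the convergence of $\<x\>_t^n=\<x+f\>_t^n-2\<x+f,f\>_t^n+\<f\>_t^n$, but what you wrote is fine.
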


Note that we have $\<f\>_1=0$ whenever $f$ is continuous and of bounded variation.

\begin{proof}[Proof of Proposition~\ref{Stolz prop}] We show first that (a) and (b) are equivalent. To this end, we may assume without loss of generality that $x(0)=x(1)$. Indeed,  the function $f(s):=-x(0)-sx(1)$ is clearly of bounded variation and hence satisfies $\<f\>_1=0$, so that  Lemma~\ref{zero qv lemma} justifies our assumption.

Next, we let $x^t(s):=x(t\wedge s)$,
$$\widetilde\theta_{n,k}:=\begin{cases}\theta_{n,k}&\text{if $k\le \lfloor (2^n-1)t\rfloor$,}\\
0&\text{otherwise,}
\end{cases}
$$
and
$$\widetilde x:=\sum_{m=0}^\infty\sum_{k=0}^{ 2^m-1}\widetilde\theta_{m,k}e_{m,k}.
$$
Since $t\in \bigcup_n\mathbb{T}_n$, the two functions $x^t$ and $\widetilde x$ differ only by a piecewise linear function, $f$, which hence satisfies $\<f\>_1=0$.  Lemma~\ref{zero qv lemma} therefore yields that $\<x\>_t=\<x^t\>_1=\<\widetilde x\>_1$.
Furthermore, it was stated in~\citep[Lemma 1.1 (ii)]{Gantert}  that
$$\<\widetilde x\>_1^n={\frac1{2^n}\sum_{m=0}^{n-1}}\sum_{k=0}^{ 2^m-1}\widetilde\theta_{m,k}^2=\frac1{2^n}\sum_{m=0}^{n-1}\sum_{k=0}^{\lfloor (2^m-1)t\rfloor}\theta_{m,k}^2.
$$
This proves the equivalence of (a) and (b).

 Now we prove the equivalence of (b) and (c). To this end, we let
$$a_n:=\sum_{m=0}^{n-1}\sum_{k=0}^{\lfloor (2^m-1)t\rfloor}\theta_{m,k}^2\qquad\text{and} \qquad b_n:=2^n.
$$
The existence of the limit  in (b) means that $a_n/b_n$ converges to $\ell_1(t)$, whereas the existence of the limit in (c) is equivalent to the convergence of $(a_{n+1}-a_n)/(b_{n+1}-b_n)$ to $\ell_2(t)$. The latter convergence implies the convergence of $a_n/b_n$ to $\ell_2(t)$ by means of the Stolz--Cesaro theorem in the form of~\citep[Theorem 1.22]{Muresan}. On the other hand, the convergence of $a_n/b_n$  to $\ell_1(t)$ entails also the convergence of $(a_{n+1}-a_n)/(b_{n+1}-b_n)$ to $\ell_1(t)$ by way of Lemma~\ref{Stolz-Cesaro converse lemma}. This concludes the proof.
\end{proof}

\subsection{Proofs of results from Section~\ref{Riemann section}}

\begin{proof}[Proof of Proposition~\ref{first Riemann prop}] Note first that the class of Riemann integrable {functions} is clearly an algebra, as can, e.g., be seen from Lebesgue's criterion for Riemann integrability~\citep[Theorem 11.33]{RudinPrinciples}. Thus, $f^2_\infty$ is   Riemann integrable.

{Next, due to the continuity of the function $t\mapsto\int_0^tf_\infty^2(s)\,ds$ and the monotonicity of $t\mapsto\<x_{\bm f}\>_t^n$, it is enough to prove the assertion for $t\in\bigcup_n\mathbb{T}_n$.} Now let $\varepsilon>0$ be given, and take $n_0\in\mathbb{N}$ such that $|f_n(s)-f_\infty(s)|<\varepsilon$ for all $s\in [0,1]$ and  $n\ge n_0$. Then, {for all $n\ge n_0$,}
 \begin{equation}\label{Riemann approx sum}
 \frac1{2^n}\sum_{k=0}^{\lfloor (2^n-1)t\rfloor}\theta_{n,k}(\bm f)^2=\frac1{2^n}\sum_{k=0}^{\lfloor (2^n-1)t\rfloor}f_n(k2^{-n})^2
 \end{equation}
 has a distance of at most $\varepsilon$ to a Riemann sum for $\int_0^tf_\infty^2(s)\,ds$. It follows that the sums in~\eqref{Riemann approx sum} converge to $\int_0^tf_\infty^2(s)\,ds$, and so part (a) of the assertion follows from Proposition~\ref{Stolz prop}. Part (b) follows by polarization as in Remark~\ref{covariation remark}. \end{proof}

\begin{proof}[Proof of Proposition~\ref{nondiff prop}] Our proof uses ideas from~\citep{deRham}, where the non-differentiability of the classical Takagi function was shown.  Let $t\in[0,1)$ be a continuity point of $f_\infty$ such that $f_\infty(t)\neq0$ (the case $t=1$ can be reduced to the case $t=0$ by symmetry). Then there exists $\varepsilon,\delta>0$ such that  $|f_\infty(s)|\ge2\varepsilon$ if $|s-t|\le\delta$. It follows that there exists $n_0\in\mathbb{N}$ such that  $|f_n(s)|\ge\varepsilon$ if $|s-t|\le\delta$ and $n\ge n_0$. For $n\in\mathbb{N}$, we denote by $s_n$ the largest $s\in\mathbb{T}_n$ such that $s\le t$. Its successor, $s'_n$, will then satisfy $s_n'>t$, and we clearly have $s_n,s'_n\to t$ as $n\uparrow\infty$. In particular, $|f_n(s_n)|\ge\varepsilon$ and $|f_n(s_n')|\ge\varepsilon$ if $n\ge n_1:=n_0\vee\lceil-\log_2\delta\rceil$.  We write $x:=x_{\bm f}$ and denote
$$x^n:=\sum_{m=0}^{n-1}\sum_{k=0}^{ 2^m-1}\theta_{m,k}(\bm f)e_{m,k}.
$$
Let us assume by way of contradiction that $x$ is differentiable at $t$. Then we  must have that
$$d_n:=\frac{x({s_n'})-x({s_n})}{s'_n-s_n}=2^n(x({s_n'})-x({s_n}))
$$
converges to a finite limit. Since $e_{p,k}(s_n)=e_{p,k}(s_n')=0$ for $p\ge n$, we have
$$x({s_n'})-x({s_n})=x^n(s_n')-x^n(s_n)=x^{n-1}({s_n'})-x^{n-1}({s_n})+f_n(s_n)\Delta_n,
$$
where   $\Delta_n=2^{-(n+1)/2}$ is the maximal amplitude of a Faber--Schauder function $e_{n-1,k}$. Now we note that
$$x^{n-1}({s_n'})-x^{n-1}({s_n})=\frac12\big(x^{n-1}({s_{n-1}'})-x^{n-1}({s_{n-1}})\big),
$$
because $s_n'-s_n=\frac12(s'_{n-1}-s_{n-1})$, the interval $[s_n,s'_n]$ is contained in $[s_{n-1},s_{n-1}']$, and each Schauder function {$e_{m,k}$ with $m\le n-2$} is linear on the latter interval. We thus arrive at the recursive relation
\begin{equation*}\begin{gathered} d_n=2^n\big(x({s_n'})-x({s_n})\big)=2^{n-1}\big(x^{n-1}({s_{n-1}'})-x^{n-1}({s_{n-1}})\big)+2^nf_n(s_n)\Delta_n\\=d_{n-1}+f_n(s_n) 2^{(n-1)/2}.
\end{gathered}\end{equation*}
Hence, $|d_n-d_{n-1}|\ge \varepsilon 2^{(n-1)/2}$, which contradicts the convergence of  the sequence $(d_n)$.
\end{proof}

\begin{proof}[Proof of Proposition~\ref{Weyl prop}] (a) {As in the proof of  Proposition~\ref{first Riemann prop}, it is enough to prove our formula for $\<y^{\bm f}_\alpha\>_t$ for the case in which $t\in\bigcup_n\mathbb{T}_n$.} By Proposition~\ref{Stolz prop}, we need to investigate the limiting behavior of
$$\frac1{2^n}\sum_{k=0}^{{\lfloor (2^m-1)t\rfloor}}\vartheta_{n,k}^2(\alpha,\bm f)=\frac{{\lfloor (2^m-1)t\rfloor+1}}{2^n}\cdot\frac1{{\lfloor (2^m-1)t\rfloor+1}}\sum_{k=0}^{{\lfloor (2^m-1)t\rfloor}}f^2_n\big(\alpha k\,\text{mod}\,1 \big).
$$
We clearly have {$(\lfloor (2^m-1)t\rfloor+1)2^{-n}\to t$}.  Moreover, since $f^2_\infty$ is Riemann integrable, Weyl's equidistribution theorem~\citep[p.~3]{KuipersNiederreiter} states that
$$\frac1n\sum_{k=0}^{n-1}f^2_\infty\big(\alpha k\,\text{mod}\,1 \big)\longrightarrow\int_0^1f_\infty^2(s)\,ds.$$
The result thus follows from Proposition~\ref{Stolz prop} and by using the uniform convergence $f_n\to f_\infty$. Part
(b) follows as in Proposition~\ref{first Riemann prop}.\end{proof}

\subsection{An auxiliary result  on Stieltjes integral equations}\label{Stieltjes section}

 In this section, we state and prove   an auxiliary result on Stieltjes integral equations, which is needed for the proof of Theorem~\ref{general IDE existence thm}.
Without doubt, this result is well known, but    we have not found a reference  for exactly the version that we need, and so we include it here for the convenience of the reader. It is also not difficult to formulate and prove extensions of this result to the case  in which  both drivers and solutions are multidimensional and possess discontinuities. For the sake of simplicity, however, we confine ourselves to continuous, {though $d$-dimensional,} drivers  and one-dimensional  solutions, as needed for the proof of Theorem~\ref{general IDE existence thm}.

\begin{proposition}\label{Peano prop}Suppose that ${A_1,\dots, A_d}\in \text{\it CBV}[0,1]$,    $ f\in C[0,1]$, and  ${g_1,\dots, g_d}\in C([0,1]\times\mathbb{R}) $ {satisfy the linear growth condition~\eqref{linear growth condition}.}   Then there exists at least one solution $B\in C[0,1]$ of the following   Stieltjes integral equation,
\begin{equation}\label{Peano Stieljes IE}
B(t)=f (t)+ {\sum_{i=1}^d\int_0^tg_i (s,  B(s))\,dA_i(s)},\qquad 0\le t\le 1.\end{equation}
Moreover, the solution~\eqref{Peano Stieljes IE} is unique if $g_1,\dots,g_d$ satisfy in addition a  local Lipschitz condition {as in~\eqref{b Lipschitz condition}.}
\end{proposition}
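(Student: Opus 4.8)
The plan is to prove Proposition~\ref{Peano prop} by the standard Picard--Lindel\"of/Peano scheme adapted to Stieltjes drivers, using the fact that each $A_i\in\text{\it CBV}[0,1]$ has a finite total variation and a continuous (hence bounded) ``total variation function'' $V_i(t):=\text{\rm Var}_{[0,t]}(A_i)$. The key preliminary estimate is that for a continuous integrand $h$ and $A_i\in\text{\it CBV}[0,1]$ one has $|\int_s^t h(u)\,dA_i(u)|\le \sup_{[s,t]}|h|\cdot (V_i(t)-V_i(s))$, so that Stieltjes integrals against the $A_i$ behave like integrals against the finite, continuous, nondecreasing measures $dV_i$. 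I would first record this, together with the observation that, by the linear growth condition~\eqref{linear growth condition} on the $g_i$, any putative solution $B$ on $[0,T]$ satisfies $\|B\|_{\infty,[0,T]}\le \|f\|_\infty + c\sum_i(1+\|B\|_{\infty,[0,T]})\,V_i(1)$, which for $T$ small enough (or rather, after a Gronwall argument using the continuous function $t\mapsto\sum_i V_i(t)$) yields an a priori bound on $\|B\|_\infty$ over all of $[0,1]$, depending only on $f$, $c$, and the $V_i(1)$. This a priori bound reduces the problem to one where the $g_i$ may be assumed bounded and (in the Lipschitz case) globally Lipschitz in the second variable, by truncating them outside a large ball.

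For existence, I would set up the Picard iteration $B_0:=f$, $B_{m+1}(t):=f(t)+\sum_i\int_0^t g_i(s,B_m(s))\,dA_i(s)$, and use the Arzel\`a--Ascoli theorem: the a priori bound plus the estimate above shows the $B_m$ are uniformly bounded, and equicontinuity follows from $|B_{m+1}(t)-B_{m+1}(s)|\le |f(t)-f(s)|+ M\sum_i(V_i(t)-V_i(s))$ where $M$ bounds the (truncated) $g_i$, together with the uniform continuity of $f$ and of each $V_i$. Extracting a uniformly convergent subsequence and passing to the limit under the Stieltjes integral (justified by uniform convergence of the integrands $g_i(\cdot,B_{m_j}(\cdot))$, which holds since the $g_i$ are uniformly continuous on the relevant compact set) produces a solution $B\in C[0,1]$ of~\eqref{Peano Stieljes IE}. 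Alternatively one could invoke a Schauder fixed-point argument for the map $B\mapsto f+\sum_i\int_0^\cdot g_i(s,B(s))\,dA_i(s)$ on a suitable closed convex equicontinuous subset of $C[0,1]$; I would present whichever is shorter, most likely the Arzel\`a--Ascoli route.

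For uniqueness under the additional local Lipschitz condition~\eqref{b Lipschitz condition}, suppose $B$ and $\widetilde B$ are two solutions; by the a priori bound both take values in a fixed compact interval $[-p,p]$, so the Lipschitz constant $L_p$ applies uniformly. Then
\begin{equation*}
|B(t)-\widetilde B(t)|\le \sum_{i=1}^d\int_0^t L_p\,|B(s)-\widetilde B(s)|\,dV_i(s)= L_p\int_0^t |B(s)-\widetilde B(s)|\,dV(s),
\end{equation*}
where $V:=\sum_i V_i$ is continuous and nondecreasing. Gronwall's inequality in its Stieltjes form (valid because $V$ is continuous, so it has no atoms) forces $B\equiv\widetilde B$. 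I expect the main obstacle to be purely bookkeeping: making the truncation argument and the a priori bound rigorous and verifying that the Stieltjes version of Gronwall's lemma applies (which is why continuity of the $A_i$, rather than mere bounded variation, matters), together with the careful justification of passing to the limit inside the Stieltjes integral in the existence part. None of these steps is deep, but they must be assembled carefully, which is presumably why the authors remark that the result is ``well known'' yet include a proof for completeness.
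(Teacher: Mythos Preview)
Your uniqueness argument is correct and matches the paper's almost verbatim: bound $|B(t)-\widetilde B(t)|$ by $L_p\int_0^t|B-\widetilde B|\,dV$ with $V=\sum_iV_i$ and apply a Stieltjes Gronwall inequality (the paper cites Groh's version; continuity of the $A_i$ is indeed what makes this clean).

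Your primary existence route, however, has a genuine gap. After extracting a uniformly convergent subsequence $B_{m_j}\to B$, passing to the limit inside the Stieltjes integral shows that $B_{m_j+1}\to T(B)$, where $T$ is the right-hand-side operator. But $(m_j+1)$ is not the subsequence $(m_j)$, so you have no reason to conclude $T(B)=B$; without a Lipschitz condition there is nothing forcing $\|B_{m_j+1}-B_{m_j}\|_\infty\to0$. This index-shift mismatch is exactly why Peano-type existence is \emph{not} proved by Picard iteration plus Arzel\`a--Ascoli. Your Schauder alternative does work and is the cleanest repair: after your truncation, $T$ maps a closed convex bounded set in $C[0,1]$ continuously into a relatively compact (equicontinuous) set, and Schauder yields a fixed point directly.

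The paper takes a third route that sidesteps both the index mismatch and the truncation: it uses the Tonelli delayed approximation
\[
B^{(n)}(t)=f(t)+\sum_{i}\int_0^{(t-1/n)^+}g_i\bigl(s,B^{(n)}(s)\bigr)\,dA_i(s),
\]
constructed inductively on intervals of length $1/n$. Each $B^{(n)}$ already satisfies a solution-type integral inequality, so the Gronwall a~priori bound applies to $B^{(n)}$ itself (no truncation of the $g_i$ needed), and equicontinuity follows as in your outline. Arzel\`a--Ascoli gives a subsequential limit, and since the delay $1/n_k\to0$, the limit automatically solves the undelayed equation---there is no shifted index to worry about. Apart from this device, the ingredients (linear growth $\Rightarrow$ Gronwall bound, boundedness on compacts $\Rightarrow$ equicontinuity, passage to the limit under the Stieltjes integral) are exactly the ones you identified.
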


\begin{proof}
Consider the following Tonelli sequence $( B^{(n)})_{n\in\mathbb{N}}$,\begin{equation}\label{Tonelli Peano Stieljes IE}
B^{(n)}(t)=\begin{cases}f(t)&\text{if $t\in[0,\frac1n]$,}\\\displaystyle
f(t)+ {\sum_{i=1}^d}\int_0^{t-1/n}g_i (s,  B^{(n)}(s))\,dA_i(s) &\text{if $t\in( \frac1n,1]$.}
\end{cases}
\end{equation}
Clearly, the solution $  B^{(n)}$ to~\eqref{Tonelli Peano Stieljes IE}  can be constructed inductively on each interval $(\frac kn,\frac{k+1}n]$.
As in the proof of the classical Peano theorem, the idea is to show that the sequence $(  B^{(n)})_{n\in\mathbb{N}}$  has  an accumulation point with respect to uniform convergence in $C[0,1]$. To this end, we show first that $ B^{(n)}(t)$  is bounded uniformly   in $t$ and $n$. Let $m$ be an upper bound for $|f(t)|$, $t\in[0,1]$,   and let
{$V_i(t)$ denote the  total variation of $A_i$ on $[0,t]$ and $V(t):=\sum_{i=1}^dV_i(t)$}. Let moreover $c\ge0$ be such that {$|g_i(t,y)|\le c(1+|y|)$ for all $i$, $t$, and $y$}. Then,  by a standard estimate for Riemann--Stieltjes integrals (e.g., Theorem 5b on p.~8 of~\citep{WidderLaplaceTransform}),
$$| B^{(n)}(t)| \le m+\int_0^{t-1/n}c(1+|  B^{(n)}(s)|)\,dV(s)\le m+cV(1)+c\int_0^t|  B^{(n)}(s)|\,dV(s).
$$
 Groh's generalized Gronwall inequality~\citep{Groh} (see also Theorem 5.1 in Appendix~5 of~\citep{EthierKurtz})  yields
$$|  B^{(n)}(t)|\le\big(m+cV(1)\big)e^{cV(t)}\le \big(m+cV(1))\big)e^{cV(1)}=:M
$$
for all $t\in[0,1]$. Hence  $(  B^{(n)}(t))_{n\in\mathbb{N}}$ is indeed uniformly bounded in $n$ and $t$. In the next step we show that it is also uniformly equicontinuous. To this end, let
$$K:=\max\Big\{|g_i (t, y)|\,\Big|\,{i=1,\dots,d},\ t\in[0,1],\ |  y|\le M\Big\}.
$$
Then one sees as above that, for $0\le t\le u\le 1$,
\begin{equation*}
|  B^{(n)}(u)-  B^{(n)}(t)|\le |  f(u)-  f(t)|+K\Big(V\big({\max\{0,u-1/n\}}\big)-V\big({\max\{0,t-1/n\}}\big)\Big).
\end{equation*}
Since $V$ is continuous~\citep[Theorem I.3b]{WidderLaplaceTransform}, and hence uniformly continuous, it follows that
 $( B^{(n)})_{n\in\mathbb{N}}$ is indeed uniformly equicontinuous. The Arzela--Ascoli theorem therefore implies the existence of a subsequence $(  B^{(n_k)})_{k\in\mathbb{N}}$ that converges uniformly toward some continuous limiting function $  B$.  The continuity of the Riemann--Stieltjes integral with respect to uniform convergence of integrands  thus  yields that $  B$ solves~\eqref{Peano Stieljes IE}.

Now suppose that {$g_1,\dots,g_d$} satisfy  local Lipschitz conditions {in the form of~\eqref{b Lipschitz condition} and let $L_p$ be the maximum of the corresponding Lipschitz constants for given $p>0$. Next,}    let $  B$ and $\widetilde  B$ be two solutions of~\eqref{Peano Stieljes IE}. Then there exists $p>0$ such that both $  B$ and $\widetilde B$ take values in $[-p,p] $.  Using again the above-mentioned standard estimate for Riemann--Stieltjes integrals  yields
\begin{align*}
|B(t)-\widetilde B(t)|&=\bigg|{\sum_{i=1}^d} \int_0^t\big(g_i (s,  B(s))-g_i (s,  \widetilde B(s))\big)\,dA_i(s)\bigg|\\
&\le {\sum_{i=1}^d}\int_0^t\big|g_i (s,  B(s))-g (s, \widetilde  B(s))\big|\,dV_i(s)\\
&\le{ L_p}\int_0^t|  B(s)-  \widetilde B(s)|\,dV(s).
\end{align*}
The generalized Gronwall inequality that was cited above now yields $ B=\widetilde B$.
  \end{proof}

\subsection{Proof of the results from Section~\ref{IDE section}}

Our proof of Theorem~\ref{general IDE existence thm} follows along the lines of~\citep{Doss}, but several supplementary  arguments are needed because of the time dependence of $\sigma$, the fact that $x$ is not a typical Brownian sample path, and because $A$ is not linear. We will also need the associativity property of the F\"ollmer integral that was established in~\citep[Theorem 13]{SchiedCPPI}. We first collect some properties of the flow $\phi$ in the following lemma. {Throughout this section, we will use the notation introduced in Theorem~\ref{general IDE existence thm}. Recall in particular that $I$ denotes an open interval containing $[0,1]$.}

\medskip

\begin{lemma}\label{flow lemma} Under the assumptions of Theorem~\ref{general IDE existence thm}, the following assertions hold for all $\tau\in I$ and $\xi ,s,t\in\mathbb{R}$.
\begin{enumerate}
\item $\phi(\tau,\xi ,t)$ is well-defined for all $\tau\in I$ and $\xi ,t\in\mathbb{R}$.\label{flow lemma1}
\item $\phi\in C^2(I\times\mathbb{R}\times\mathbb{R})$ and $\phi_t\in C^2(I\times\mathbb{R}\times\mathbb{R})$.\label{flow lemma2}
\item $\phi(\tau,\phi(\tau,\xi ,s),t)=\phi(\tau,\xi ,s+t)$. \label{flow lemma3}
\item $\phi_t(\tau,\xi ,t)=\sigma(\tau,\phi(\tau,\xi ,t))$.\label{flow lemma4}
\item $\phi_{tt}(\tau,\xi ,t)=\sigma_\xi (\tau,\phi(\tau,\xi ,t))\sigma (\tau,\phi(\tau,\xi ,t))$.\label{flow lemma5}
\item $\phi_\xi (\tau,\xi ,t)=v(t)$ solves the linear ordinary differential equation $$\dot v(t)=\sigma_\xi (\tau,\phi(\tau,\xi ,t))v(t)$$
 with initial condition $v(0)=1$ and so
 \begin{equation}\label{flow lemma phix}
 \phi_\xi (\tau,\xi ,t)=e^{\int_0^t\sigma_\xi (\tau,\phi(\tau,\xi ,s))\,ds}.
 \end{equation}\label{flow lemma6}
\item $\phi_\tau(\tau,\xi ,t)=w(t)$ solves the linear ordinary differential equation
 $$\dot w(t)=\sigma_\tau(\tau,\phi(\tau,\xi ,t))+\sigma_\xi (\tau,\phi(\tau,\xi ,t))w(t)$$
 with initial condition $w(0)=0$ and is hence given by
 \begin{equation}\label{flow lemma phitau}
 \phi_\tau(\tau,\xi ,t)=\int_0^te^{\int_s^t\sigma_\xi (\tau,\phi(\tau,\xi ,r))\,dr}\sigma_\tau(\tau,\phi(\tau,\xi ,s))\,ds .
 \end{equation}\label{flow lemma7}
 \item $\phi_t(\tau,\xi ,-t)=\phi_\xi (\tau,\xi ,-t)\sigma(\tau,\xi )$.\label{flow lemma8}
 \item $\phi_{\xi \xi }(\tau,\xi ,-t)\sigma(\tau,\xi )^2-2\phi_{\xi t}(\tau,\xi ,-t)\sigma(\tau,\xi )+\phi_{tt}(\tau,\xi ,-t)=-\phi_\xi (\tau,\xi ,-t)\phi_{tt}(\tau,\phi(\tau,\xi ,-t),t)$.\label{flow lemma9}
 \end{enumerate}
\end{lemma}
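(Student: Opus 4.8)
The plan is to establish the nine assertions essentially in the order listed, since each later item leans on the regularity provided by the earlier ones. For \eqref{flow lemma1} and \eqref{flow lemma2}, I would invoke the standard global existence and smooth-dependence theory for ODEs: since $\sigma\in C^2(I\times\mathbb{R})$ with bounded first derivatives, the map $u\mapsto\sigma(\tau,u)$ is globally Lipschitz uniformly in $\tau$, so the flow $\phi(\tau,\xi,t)$ of $\dot u=\sigma(\tau,u)$ exists for all $t\in\mathbb{R}$ (no blow-up, by the linear growth implied by boundedness of $\sigma_\xi$ together with a Gronwall argument), and the theorem on differentiable dependence on initial conditions and parameters gives $\phi\in C^2$ in $(\tau,\xi,t)$; differentiating the defining relation $\phi_t=\sigma(\tau,\phi)$ and using $\sigma\in C^2$ shows $\phi_t$ is also $C^2$. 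Assertion \eqref{flow lemma3} is the semigroup/flow property, which follows because both sides, as functions of $t$, solve $\dot u=\sigma(\tau,u)$ with the same initial value $\phi(\tau,\xi,s)$ at $t=0$, so uniqueness of ODE solutions forces equality. Assertion \eqref{flow lemma4} is just \eqref{flow t-differential eq}, restated, and \eqref{flow lemma5} follows by differentiating \eqref{flow lemma4} once more in $t$ via the chain rule.

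For \eqref{flow lemma6} and \eqref{flow lemma7}, the idea is the usual variation-of-parameters computation: differentiating the identity $\phi_t(\tau,\xi,t)=\sigma(\tau,\phi(\tau,\xi,t))$ with respect to $\xi$ (resp.\ $\tau$) and interchanging the order of partial derivatives (legitimate by \eqref{flow lemma2}) yields the stated linear ODEs for $v=\phi_\xi$ and $w=\phi_\tau$; the initial conditions $v(0)=1$ and $w(0)=0$ come from differentiating $\phi(\tau,\xi,0)=\xi$. Solving these scalar linear ODEs by the integrating factor $\exp\!\big(\int_0^t\sigma_\xi(\tau,\phi(\tau,\xi,r))\,dr\big)$ gives the closed forms \eqref{flow lemma phix} and \eqref{flow lemma phitau}. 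Note that $\phi_\xi$ is strictly positive everywhere, which will matter later for the well-posedness of the Stieltjes equation~\eqref{Doss-Sussman Stieltjes int eq}.

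The last two assertions \eqref{flow lemma8} and \eqref{flow lemma9} are the genuinely computational part, and I expect \eqref{flow lemma9} to be the main obstacle — it is the identity that, after composing $\phi$ with itself with arguments $+t$ and $-t$, makes the drift terms in the Doss--Sussman reduction collapse correctly. The strategy is to differentiate the flow identity \eqref{flow lemma3} in the form $\phi(\tau,\phi(\tau,\xi,-t),t)=\xi$ repeatedly in $\xi$ (holding $\tau,t$ fixed): differentiating once gives $\phi_\xi(\tau,\phi(\tau,\xi,-t),t)\,\phi_\xi(\tau,\xi,-t)=1$, which already yields \eqref{flow lemma8} after combining with \eqref{flow lemma4} and \eqref{flow lemma8}'s own left side rewritten via the chain rule. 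Differentiating a second time in $\xi$ produces a relation among $\phi_{\xi\xi}$ evaluated at the two nested points; then, using \eqref{flow lemma4}, \eqref{flow lemma5}, and \eqref{flow lemma8} to re-express the $t$-derivatives $\phi_{\xi t}$ and $\phi_{tt}$ at $(\tau,\xi,-t)$ in terms of $\sigma$, $\sigma_\xi$ and $\phi_\xi$, one rearranges to isolate the left-hand combination of \eqref{flow lemma9} and matches it against $-\phi_\xi(\tau,\xi,-t)\,\phi_{tt}(\tau,\phi(\tau,\xi,-t),t)$. The bookkeeping of which derivative is evaluated at $(\tau,\xi,-t)$ versus $(\tau,\phi(\tau,\xi,-t),t)$ is where care is needed, but once \eqref{flow lemma1}--\eqref{flow lemma8} are in hand this is a finite sequence of chain-rule applications and algebraic rearrangement, with no further analytic input required.
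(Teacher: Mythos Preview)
Your plan for items (a)--(g) matches the paper's proof; the paper packages the smooth-dependence argument by lifting to the autonomous two-dimensional system $\dot{\bm y}=(0,\sigma(\bm y))^\top$ and then quoting standard flow theorems, but this is equivalent to your direct parameter-dependence argument.

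The substantive difference is in (h) and (i). You propose differentiating the identity $\phi(\tau,\phi(\tau,\xi,-t),t)=\xi$ repeatedly with respect to~$\xi$. The paper instead differentiates the equivalent identity $\phi(\tau,\phi(\tau,y,t),-t)=y$ with respect to~$t$. For (h), a single $t$-derivative gives
\[
\phi_\xi(\tau,\phi(\tau,y,t),-t)\,\phi_t(\tau,y,t)-\phi_t(\tau,\phi(\tau,y,t),-t)=0,
\]
and substituting $\xi=\phi(\tau,y,t)$ together with (d) yields (h) immediately. By contrast, your $\xi$-derivative produces only $\phi_\xi(\tau,\phi(\tau,\xi,-t),t)\,\phi_\xi(\tau,\xi,-t)=1$, a relation between the two $\phi_\xi$'s that does not by itself control $\phi_t$. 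The phrase ``combining with \ldots\ (h)'s own left side rewritten via the chain rule'' reads as circular: to pass from your displayed identity to (h) one must also know $\sigma(\tau,\phi(\tau,y,t))=\phi_\xi(\tau,y,t)\,\sigma(\tau,y)$, which is precisely (h) in its equivalent form. This auxiliary identity can indeed be established independently (for instance by an ODE-uniqueness argument in $t$), but you have not indicated that step.

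For (i), the paper differentiates the displayed equation above once more in~$t$ and then reduces the remaining verification to the identity $\phi_{tt}(\tau,\phi(\tau,\xi,t),-t)=\phi_{tt}(\tau,\phi(\tau,\xi,-t),t)$, which is immediate from (e) and (c) since both sides equal $\sigma_\xi(\tau,\xi)\sigma(\tau,\xi)$. Your second-$\xi$-derivative route can be pushed through, but it requires feeding (h) back in together with the $\xi$-derivative of (h), so the bookkeeping is longer than your sketch suggests. The $t$-derivative approach is more economical and avoids the apparent circularity in your outline for (h).
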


\begin{proof} Since $\sigma_\xi $ is bounded by assumption, $\sigma(\tau,\xi )$ satisfies both a linear-growth and a Lipschitz condition in $\xi $. Therefore  the ordinary differential equation $\dot y(t)=\sigma(\tau,y(t))$ admits a unique global solution for all initial values $y(0)$ and all $\tau\in[0,1]$. This implies assertions~\ref{flow lemma1} and~\ref{flow lemma4}.

To show the remaining assertions,
we introduce a two-dimensional extension of $\sigma$ by letting $\bm\sigma(t,\xi ):=(0,\sigma(t,\xi ))^\top$. Then the solution $\bm y(t)$ of the two-dimensional autonomous ordinary differential equation $\dot{\bm y}(t)=\bm\sigma(\bm y(t))$ with initial condition $\bm y(0)=(\tau,\xi )^\top$ is given by $(\tau,y(t))^\top $, where $y(t)$ is as above.  Thus, $\bm\phi(\tau,\xi ,t):=(\tau,\phi(\tau,\xi ,t))^\top$ is equal to the flow of the autonomous equation $\dot{\bm y}(t)=\bm\sigma(\bm y(t))$.
In view of~\ref{flow lemma1}, assertions~\ref{flow lemma2},~\ref{flow lemma3},~\ref{flow lemma6}, and~\ref{flow lemma7} therefore follow from Theorems 2.10 and 6.1 in~\citep{Teschl}. Assertion~\ref{flow lemma5}  follows by applying~\ref{flow lemma4} twice.

To prove~\ref{flow lemma8},  let $y:=\phi(\tau,\xi ,-t)$ so that $\xi =\phi(\tau,y,t)$ and $\phi(\tau,\phi(\tau,y,t),-t)=y$ by~\ref{flow lemma3}.  It follows that
\begin{equation}\label{flow lemma 8 aux eqn}
0=\frac\partial{\partial t}\phi(\tau,\phi(\tau,y,t),-t)=\phi_\xi (\tau,\phi(\tau,y,t),-t)\phi_t(\tau,y,t)-\phi_t(\tau,\phi(\tau,y,t),-t)
\end{equation}
Inserting $\xi =\phi(\tau,y,t)$  and using~\ref{flow lemma4} yields~\ref{flow lemma8}.

To prove~\ref{flow lemma9} we let once again $y:=\phi(\tau,\xi ,-t)$  and take the derivative of~\eqref{flow lemma 8 aux eqn} with respect to $t$. This yields
\begin{align*}0&=\phi_{\xi \xi }(\tau,\phi(\tau,y,t),-t)\phi_t(\tau,y,t)^2-\phi_{t\xi }(\tau,\phi(\tau,y,t),-t)\phi_t(\tau,y,t)\\
&+\phi_\xi (\tau,\phi(\tau,y,t),-t)\phi_{tt}(\tau,y,t)+\phi_{tt}(\tau,\phi(\tau,y,t),-t)-\phi_{t\xi }(\tau,\phi(\tau,y,t),-t)\phi_t(\tau,y,t).
\end{align*}
Using again $\xi =\phi(\tau,y,t)$  and~\ref{flow lemma4} gives
$$\phi_{\xi \xi }(\tau,\xi ,-t)\sigma(\tau,\xi )^2-2\phi_{\xi t}(\tau,\xi ,-t)\sigma(\tau,\xi )+\phi_{tt}(\tau,\xi ,-t)=-\phi_\xi (\tau,\xi ,-t)\phi_{tt}(\tau,\phi(\tau,\xi ,t),-t).
$$
Assertion~\ref{flow lemma9} will thus follow if we can show that
\begin{equation}\label{flow lemma9 aux eq 1}
\phi_{tt}(\tau,\phi(\tau,\xi ,t),-t)=\phi_{tt}(\tau,\phi(\tau,\xi ,-t),t).
\end{equation}
By~\ref{flow lemma5} and~\ref{flow lemma3}, the left-hand side of~\eqref{flow lemma9 aux eq 1}
 is equal to
 $$\sigma_\xi (\tau,\phi(\tau,\phi(\tau,\xi ,t),-t))\sigma(\tau,\phi(\tau,\phi(\tau,\xi ,t),-t))=\sigma_\xi (\tau,\xi )\sigma(\tau,\xi ),
 $$
 and the same argument gives that also the right-hand side of~\eqref{flow lemma9 aux eq 1}
is equal to $\sigma_\xi (\tau,\xi )\sigma(\tau,\xi )$. This implies~\eqref{flow lemma9 aux eq 1} and in turn~\ref{flow lemma9}.\end{proof}

\medskip

\begin{proof}[Proof of Theorem~\ref{general IDE existence thm}]
Since $\sigma_\xi $
is bounded by assumption, it follows from~\eqref{flow lemma phix} that there are constants $c,\varepsilon>0$ such that $\varepsilon\le \phi_\xi (\tau,\xi ,t)\le c$ for all $\tau$, $\xi $, and $t$. In particular, $\phi$ satisfies a linear growth condition in its second argument. It   follows that
$$g_1(t,y):=\frac{b(t,\phi(t,y,x(t)))}{\phi_\xi (t,y,x(t))}
$$
is continuous in $t$ and satisfies a linear growth condition in $y$, uniformly in $t\in[0,1]$. Since $\phi\in C^2(I\times\mathbb{R}\times\mathbb{R})$ and $b$ satisfies a local Lipschitz condition,  $g_1$ also satisfies a local Lipschitz condition uniformly in $t\in [0,1]$.  Next, we consider
$$g_2(t,y):=-\frac{\phi_\tau(t,y,x(t))}{\phi_\xi (t,y,x(t))}.
$$
It follows from~\eqref{flow lemma phitau} that the numerator is bounded in $y\in\mathbb{R}$, uniformly in $t\in[0,1]$. Moreover, $g_2(t,y)$ satisfies a local Lipschitz condition as $\phi\in C^2(I\times\mathbb{R}\times\mathbb{R})$. We now consider
$$g_3(t,y):=-\frac12\frac{\phi_{tt}(t,y,x(t))}{\phi_\xi (t,y,x(t))}.
$$
Since both $\phi$ and $\sigma$ satisfy linear growth conditions in their second arguments and $\sigma_\xi $ is bounded, it follows from part (e) of Lemma~\ref{flow lemma} that $g_3(t,y)$ satisfies a linear growth condition in $y$, uniformly in $t\in[0,1]$. As moreover $\phi_t\in C^2(I\times\mathbb{R}\times\mathbb{R})$, it follows that $g_3(t,y)$ satisfies a local Lipschitz condition uniformly in $t\in[0,1]$.
{When letting $A_1(t):=A(t)$, $A_2(t):=t$, and $A_3(t):=\<x\>_t$, we see that}  the Stieltjes integral equation~\eqref{Doss-Sussman Stieltjes int eq} satisfies the assumptions of Proposition~\ref{Peano prop}  so that~\eqref{Doss-Sussman Stieltjes int eq}  admits a unique solution $B$ for each initial value $y\in\mathbb{R}$. Using It\=o's formula as in the motivation of Theorem~\ref{general IDE existence thm} thus yields that $z(t):=\phi(t,B(t),x(t))$ is indeed a solution of~\eqref{general IDE existence thm eq in thm}. This establishes the existence of solutions.

To show uniqueness of solutions to~\eqref{general IDE existence thm eq in thm}, we let $\widetilde z$ be an arbitrary solution with initial condition $\widetilde z(0)=z_0$ and define
\begin{equation}\label{Doss B def eq}
\widetilde B(t):=\phi(t,\widetilde z(t),-x(t)).
\end{equation}
It follows from part (c)
of Lemma~\ref{flow lemma} that then $\phi(t,\widetilde B(t),x(t))=\widetilde z(t)$. We will show that $\widetilde B$ solves the Stieltjes integral equation~\eqref{Doss-Sussman Stieltjes int eq} and hence must coincide with $B$ due to the already established uniqueness of solutions to~\eqref{Doss-Sussman Stieltjes int eq}, {and then $\widetilde z(t)=z(t)$.} We clearly have $\widetilde B(0)=z_0$. To analyze the dynamics of $\widetilde B$, we want to apply It\=o's formula. To this end, we note first that, by definition, $\sigma(t,\widetilde z(t))$ is an admissible integrand for $x$ and that ${\<\widetilde  z\>_t}=\int_0^t\sigma(s,\widetilde z(s))^2\,d\<x\>_s$ as well as $\<\widetilde z,x\>_t=\int_0^t\sigma(s,\widetilde z(s))\,d\<x\>_s$ by  Proposition~\ref{ide sol qv prop} and polarization~\eqref{polarization in approx}. Thus,  It\=o's formula yields that
\begin{align}
\lefteqn{\widetilde B(t)-\widetilde B(0)}\nonumber\\
&=\int_0^t\phi_\tau(s,\widetilde z(s),-x(s))\,ds +\int_0^t\phi_\xi (s,\widetilde z(s),-x(s))\,d\widetilde z(s)-\int_0^t\phi_t(s,\widetilde z(s),-x(s))\,dx(s)\nonumber\\
&\qquad+\frac12\int_0^t\phi_{\xi \xi }(s,\widetilde z(s),-x(s))\,d\<\widetilde z\>_s+\frac12\int_0^t\phi_{tt}(s,\widetilde z(s),-x(s))\,d\<x\>_s\label{Doss first Ito appl eq}\\
&\qquad-\int_0^t\phi_{\xi t}(s,\widetilde z(s),-x(s))\,d\<\widetilde z,x\>_s.\nonumber
\end{align}
Applying the fact that $\widetilde z$ solves~\eqref{general IDE existence thm eq in thm},  the associativity theorems for Stieltjes and It\=o integrals,~\citep[Theorem I.5c]{WidderLaplaceTransform} and~\citep[Theorem 13]{SchiedCPPI}, and part~\ref{flow lemma8} of Lemma~\ref{flow lemma} give that
\begin{align*}
\lefteqn{\int_0^t\phi_\xi (s,\widetilde z(s),-x(s))\,d\widetilde z(s)}\\
&=\int_0^t\phi_\xi (s,\widetilde z(s),-x(s))\sigma(s,\widetilde z(s))\,dx(s)+\int_0^t\phi_\xi (s,\widetilde z(s),-x(s))b(s,\widetilde z(s))\,dA(s)\\
&=\int_0^t\phi_t(s,\widetilde z(s),-x(s))\,dx(s)+\int_0^t\phi_\xi (s,\widetilde z(s),-x(s))b(s,\widetilde z(s))\,dA(s).
\end{align*}
In particular in~\eqref{Doss first Ito appl eq} all It\=o integrals with respect to $dx(s)$ cancel out. Next, the sum of the integrals involving $d\<\widetilde z\>_s$, $d\<x\>_s$, or $d\<\widetilde z,x\>_s$ in~\eqref{Doss first Ito appl eq} is equal to
\begin{align*}\frac12\int_0^t\bigg(\phi_{\xi \xi }(s,\widetilde z(s),-x(s))\sigma(s,\widetilde z(s))^2+\phi_{tt}(s,\widetilde z(s),-x(s))-2\phi_{\xi t}(s,\widetilde z(s),-x(s))\sigma(s,\widetilde z(s))\bigg)\,d\<x\>_s\\
=-\frac12\int_0^t\phi_{\xi }(s,\widetilde z(s),-x(s))\phi_{tt}(s,\widetilde B(s),x(s))\,d\<x\>_s,
\end{align*}
where we have used part~\ref{flow lemma9} of Lemma~\ref{flow lemma} and~\eqref{Doss B def eq}.

Next, differentiating the identity $\xi =\phi(\tau,\phi(\tau,\xi ,t),-t)$ with respect to $\xi $ and $\tau$ yields
$$
\phi_\xi (\tau, \phi(\tau,\xi ,t),-t)=\frac1{\phi_\xi (\tau,\xi ,t)}
$$
and
$$\phi_\tau(\tau, \phi(\tau,\xi ,t),-t)=-\phi_\xi (\tau, \phi(\tau,\xi ,t),-t)\phi_\tau(\tau,\xi ,t)=-\frac{\phi_\tau(\tau,\xi ,t)}{\phi_\xi (\tau,\xi ,t)}.
$$
Since $\phi(s,\widetilde B(s),x(s))=\widetilde z(s)$ we hence get
$$\phi_{\xi }(s,\widetilde z(s),-x(s))=\frac1{\phi_\xi (s,\widetilde B(s),x(s))},\qquad \phi_\tau(s,\widetilde z(s),-x(s))=-\frac{{\phi_\tau(s,\widetilde B(s),x(s))}}{\phi_\xi (s,\widetilde B(s),x(s))}.
$$
Putting all this back into~\eqref{Doss first Ito appl eq} yields that
$$\widetilde B(t)-{z_0}=-\int_0^t\frac{{\phi_\tau(s,\widetilde B(s),x(s))}}{\phi_\xi (s,\widetilde B(s),x(s))}\,ds+\int_0^t\frac{b(s,\widetilde z(s))}{\phi_\xi (s,\widetilde B(s),x(s))}\,dA(s)-\frac12\int_0^t\frac{\phi_{tt}(s,\widetilde B(s),x(s))}{\phi_{\xi}(s,\widetilde B(s),x(s))}\,d\<x\>_s.
$$
That is, $\widetilde B$ solves~\eqref{Doss-Sussman Stieltjes int eq}.
 \end{proof}

\begin{proof}[Proof of Corollary~\ref{support corollary}] (a): As observed in the proof of Theorem~\ref{general IDE existence thm}, the derivative $\phi_\xi$ is bounded away from zero and from above. It is therefore sufficient to show that for every $\beta\in\mathbb{R}$ there exists $b\in\mathbb{R}$ such that the {solution} of the integral equation~\eqref{Doss-Sussman Stieltjes int eq} with constant {term} $b\in\mathbb{R}$ is such that $B(t_0)=\beta$.

Let us denote the solution {of~\eqref{Doss-Sussman Stieltjes int eq} with given $b\in\mathbb{R}$} by $B_b$. Since $\<x\>_t=t$ and $A(t)=t$, the equation~\eqref{Doss-Sussman Stieltjes int eq} is in fact an ordinary differential equation of the form
\begin{equation*}
B'_b(t)=b g(t,B_b(t))+f(t,B_b(t)),
\end{equation*}
where $f$ and $g$ are continuous and satisfy local Lipschitz conditions in $\xi$. In addition, $g>0$ is bounded and bounded away from zero, and $f(t,\xi) $  has at most linear growth in $\xi$. A standard argument using Gronwall's inequality therefore yields the continuity of the map  $b\mapsto B_b(t_0)$. Moreover, there are constants $c_g^\pm$ and $c_f^\pm$ such that {$c_g^\pm>0$ and}
\begin{align*}
B'_b(t)&\le bc_g^++c_f^+B_b(t),\\
B'_b(t)&\ge  bc_g^-+c_f^-B_b(t).
\end{align*}
A standard comparison result for ordinary differential equations~\citep[Theorem 1.3]{Teschl} yields that $B_b(t)$ is bounded from above and from below by the respective solutions of the ordinary differential equations
$$y'(t)=bc_g^\pm+c_f^\pm y(t),\qquad y(0)=z_0.
$$
 Since the values of these solutions at $t_0$ range  through  all of $\mathbb{R}$ as $b$  varies between  $-\infty$ and $+\infty$, it follows that $\inf_{b\in\mathbb{R}}B_b(t_0)=-\infty$ and $\sup_{b\in\mathbb{R}}B_b(t_0)=+\infty$. The already established continuity of $b\mapsto B_b(t_0)$ therefore yields the result. 

(b): Let $U$ be an open subset of $C[0,1]$ and take $z_0:=f(0)$ for some $f\in U$. Since $\phi_\xi$ is bounded away from zero and from above, it is not difficult to construct a continuously differentiable function $B$ such that $z(t):=\phi(t,B(t),x(t))\in U$ for all $t$.  But when letting
$$b(t):=\phi_\xi(t,B(t),x(t))B'(t)+\phi_\tau(t,B(t),x(t))+\frac12\phi_{tt}(t,B(t),x(t)),
$$
it follows that $B$ solves~\eqref{Doss-Sussman Stieltjes int eq}  for this choice of $b$. Hence, $z$ solves~\eqref{z dense IDE}, {which completes the proof of part (b)}.

{The final assertion of the corollary follows from the fact that the solutions of~\eqref{z dense IDE 1} and~\eqref{z dense IDE} belong to the  set~\eqref{Bick Willinger set} according to to Lemma~\ref{zero qv lemma}.} \end{proof}

\noindent{{\bf Acknowledgement.} The authors thank two anonymous referees for comments that helped to improve a previous version of the manuscript. }

	 \parskip-0.5em\renewcommand{\baselinestretch}{0.9}\normalsize
\bibliography{CTbook}{}
\bibliographystyle{abbrv}

\end{document}